\newcommand{\norm}[1]{\left\lVert#1\right\rVert}
\newcommand{\abs}[1]{\left\lvert#1\right\rvert}
\newcommand{\diff}{\,\text{d}}
\newcommand{\calI}{\mathcal{I}}
\newcommand{\calP}{\mathcal{P}}
\newcommand{\calR}{\mathcal{R}}
\newcommand{\bbF}{\mathbb{F}}
\newcommand{\bbC}{\mathbb{C}}
\newcommand{\bbR}{\mathbb{R}}
\newcommand{\eps}{\epsilon}
\newcommand{\sn}{\text{ sn}}
\newcommand{\cn}{\text{ cn}}
\newcommand{\sign}[1]{\text{sign}\left(#1\right)}
\newcommand{\nlam}{{n_\lambda}}
\newcommand{\hZ}{\widehat{Z}}
\newcommand{\hM}{\widehat{M}}
\newcommand{\ha}{\widehat{a}}
\newcommand{\hc}{\widehat{c}}
\newcommand{\hsigma}{\widehat{\sigma}}
\newcommand{\hw}{\widehat{w}}
\newcommand*{\extendadd}{
    \mathbin{
        \mathpalette\extend@add{}
    }
}
\newcommand*{\extend@add}[2]{
    \ooalign{
        $\m@th#1\leftrightarrow$%
        \vphantom{$\m@th#1\updownarrow$}
        \cr
        \hfil$\m@th#1\updownarrow$\hfil
    }
}
\begin{document}
\title{Interior Eigensolver for Sparse Hermitian Definite Matrices Based on Zolotarev's Functions}

\author{Yingzhou Li$^\sharp$,
        Haizhao Yang$^\dagger$,
        \vspace{0.1in}\\
        $\sharp$ School of Mathematical Sciences, Fudan University\\
        $\dagger$ Department of Mathematics, Purdue University
}

          \author{Yingzhou Li\thanks{School of
          Mathematical Sciences, Fudan University,
          (yingzhouli0417@gmail.com). \url{https://yingzhouli.com/}}
          \and Haizhao Yang \thanks{Department
          of Mathematics, Purdue University,
          (haizhao@purdue.edu). \url{https://haizhaoyang.github.io/}}}

         \pagestyle{myheadings} \markboth{Interior Eigensolver Based on Zolotarev's Functions}{Yingzhou Li and Haizhao Yang} \maketitle

          \begin{abstract}
This paper proposes an efficient method for computing selected generalized
eigenpairs of a sparse Hermitian definite matrix pencil $(A,B)$. Based on
Zolotarev's best rational function approximations of the signum function and
conformal mapping techniques, we construct the best rational function
approximation of a rectangular function supported on an arbitrary interval
via function compositions with partial fraction representations. This new
best rational function approximation can be applied to construct spectrum
filters of $(A,B)$ with a smaller number of poles than a direct construction
without function compositions. Combining fast direct solvers and the
shift-invariant generalized minimal residual method, a hybrid fast algorithm
is proposed to apply spectral filters efficiently.  Compared to the
state-of-the-art algorithm FEAST, the proposed rational function
approximation is more efficient when sparse matrix factorizations are
required to solve multi-shift linear systems in the eigensolver, since the
smaller number of matrix factorizations is needed in our method.  The
efficiency and stability of the proposed method are demonstrated by
numerical examples from computational chemistry.
          \end{abstract}
\begin{keywords}  Generalized eigenvalue problem; spectrum slicing;
rational function approximation; sparse Hermitian matrix; Zolotarev's
function; shift-invariant GMRES.
\end{keywords}

 \begin{AMS} 44A55; 65R10; 65T50
\end{AMS}

\section{Introduction}
\label{sec:intro}

Given a sparse Hermitian definite matrix pencil $(A,B)$ (i.e., $A$ and $B$
are Hermitian and $B$ is positive-definite) in $\bbF^{N\times N}$, where
$\bbF=\bbR$ or $\bbF=\bbC$, and an interval $(a, b)$ of interest, this paper
aims at identifying all the eigenpairs $\{(\lambda_j,x_j)\}_{1\leq j\leq
\nlam}$ \footnote{ Through out the paper, we assume that the exact number of
eigenvalues, $\nlam$, is known a priori, which would simplify the
presentation of the method. While, in practice, an estimated number of
$\nlam$ is enough for the algorithm.} of $(A, B)$ in $(a, b)$, i.e.,
\begin{equation}
    Ax_j = \lambda_j B x_j \quad \text{and} \quad a < \lambda_j < b,
    \quad j = 1,2,\dots,\nlam.
    \label{eq:IGEVP}
\end{equation}
The interior generalized eigenvalue problem not only can be applied
to solve the full generalized eigenvalue problem via the spectrum
slicing idea~\cite{Aktulga2014, VanBarel2016, Li2016Xi, Polizzi2009,
Sakurai2003, Sakurai2007, Schofield2012, Xi2016, Ye2016}, but also
is a stand-alone problem encountered in many fields in science
and engineering (such as computational chemistry, control theory,
material science, etc.), where a partial spectrum is of interest.

\subsection{Related Work}

A powerful tool for solving the interior generalized eigenvalue
problem is the subspace iteration method accelerated by spectrum
filters. Let $P_{ab}(A,B)$ be an approximate spectrum projector onto
the eigen-subspace of the matrix pencil $(A, B)$ corresponding to
the desired eigenvalues in $(a, b)$.
%
%
%
%
A possible way to construct $P_{ab}(A,B)$  is to design a filter function $R_{ab}(x)$ as a good
approximation to a rectangular function with
a support on $(a,b)$ (denoted as $S_{ab}(x)$), and define $P_{ab}(A,B)
= R_{ab}(B^{-1}A)$. There are mainly two kinds of filter functions:
polynomial filters \cite{Li2016Xi, Schofield2012} and rational filters
\cite{Aktulga2014, VanBarel2016, VanBarel2016Kravanja, Polizzi2009,
Sakurai2003, Sakurai2007, Xi2016, Ye2016}. The
difficulty in designing an appropriate filter comes from the dilemma
that: an accurate approximation to the spectrum projector requires
a polynomial of high degree or a rational function with many poles;
however this in turn results in expensive computational cost in applying
the spectrum projector $R_{ab}(B^{-1}A)$.

In general, a rational filter can be written as follows
\begin{equation}\label{eq:rationalfilter}
    R_{ab}(x)=\alpha_0+\sum_{j=1}^p \frac{\alpha_j}{x-\sigma_j},
\end{equation}
where $\{\alpha_j\}_{0 \leq j \leq p}$ are weights, $\{\sigma_j\}_{1
\leq j \leq p}$ are poles, and $p$ is the number of poles. Hence,
applying the spectrum projector $R_{ab}(B^{-1}A)$ to a vector $v$
requires solving $p$ linear systems $\{(A-\sigma_j B)^{-1}Bv\}_{1\leq
j\leq p}$.  Therefore, a large number $p$ makes it expensive to apply
the approximate spectrum projector $R_{ab}(B^{-1}A)$.  A natural idea
is to solve the linear systems $\{(A-\sigma_j B)^{-1}Bv\}_{1\leq j\leq
p}$ in parallel.  However, for the purposes of energy efficiency and
numerical stability, an optimal $p$ is always preferred. Extensive
effort has been made to develop rational functions with $p$ as small
as possible while keeping the accuracy of the approximation.

\begin{figure}[htb]
    \centering
    \begin{subfigure}[t]{0.48\textwidth}
        \centering
        \includegraphics[width=\textwidth]{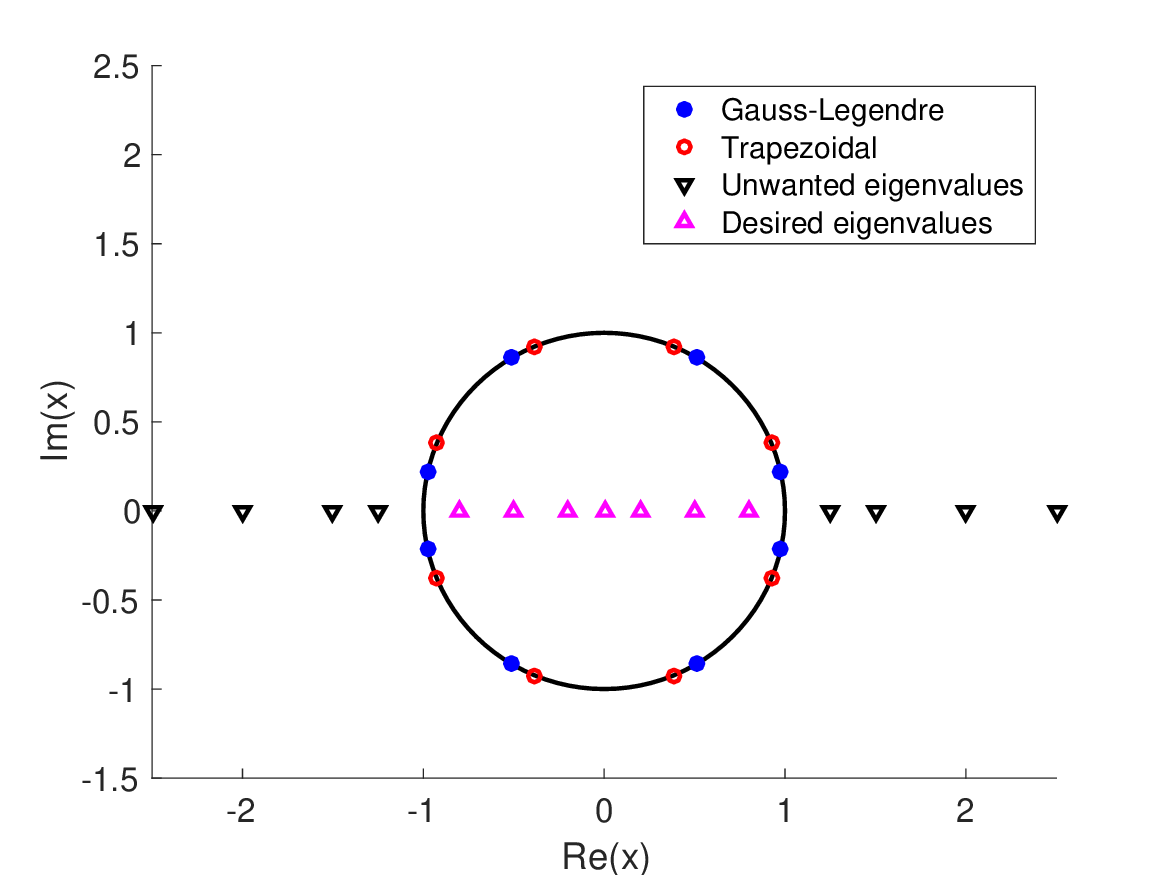}
        \caption{}
    \end{subfigure}
    ~~
    \begin{subfigure}[t]{0.48\textwidth}
        \centering
        \includegraphics[width=\textwidth]{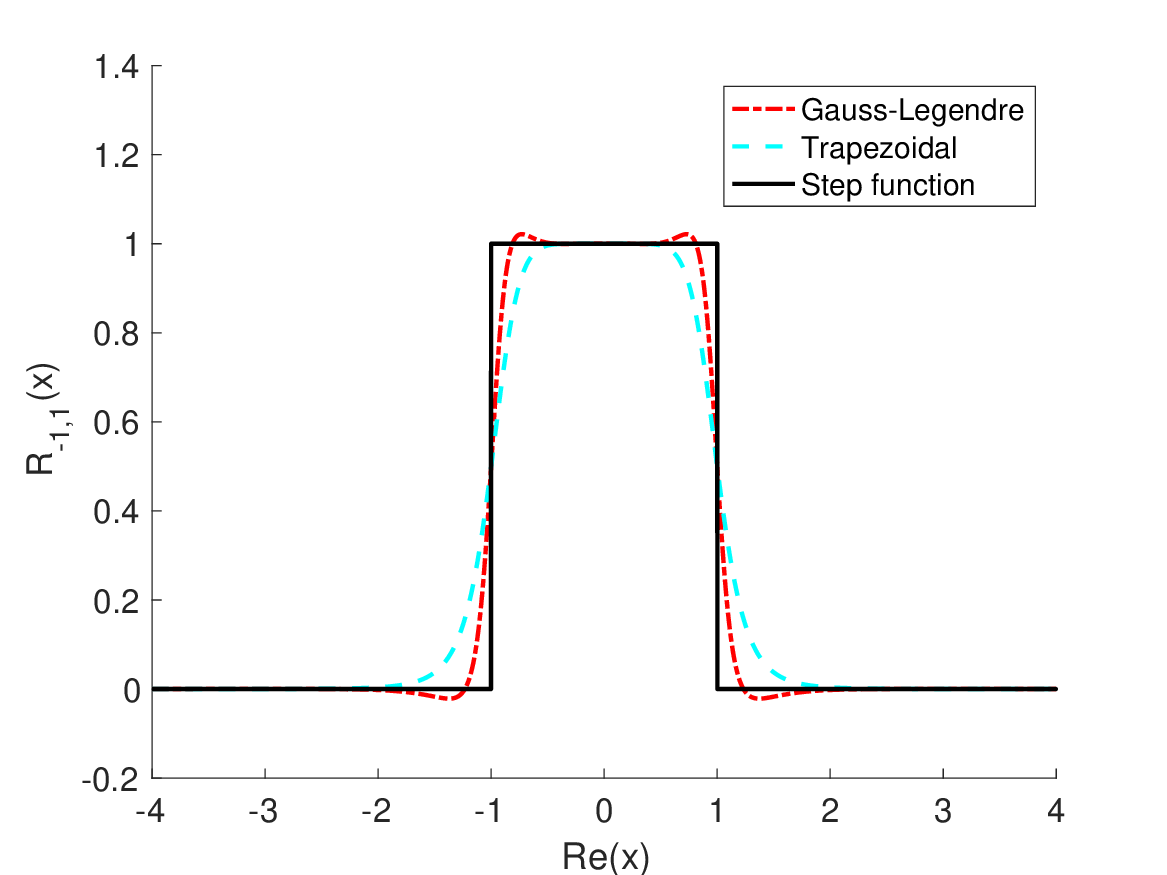}
        \caption{}
    \end{subfigure}

    \caption{(a) An example of a unit circle contour $\Gamma$ centered at
    the origin, i.e., the desired spectrum range is $(-1,1)$, together
    with eight Gauss-Legendre quadrature points (solid blue circle)
    and eight Trapezoid quadrature points (red circle).  The desired
    eigenvalues (pink up triangular) are inside the contour whereas
    the unwanted eigenvalues (black down triangular) are outside. (b)
    A rectangular function in solid black line  with rational
    functions corresponding to the quadratures from (a).}
    \label{fig:1}

\end{figure} 

Many rational filters in the literature were constructed by discretizing
the contour integral on the complex plane,
\begin{equation}
    \pi(x) = \frac{1}{2\pi \imath} \oint_{\Gamma} \frac{1}{x-z} \diff z,
    \quad x \notin \Gamma \label{eq:contourintegral}
\end{equation}
with an appropriate quadrature rule (e.g., the Gauss-Legendre quadrature
rule~\cite{Polizzi2009}, the trapezoidal quadrature rule~\cite{Tang2014,
Ye2016}, and the Zolotarev quadrature rule~\cite{Guttel2015}). Here
$\Gamma$ is a closed contour on the complex plane intersecting the
real axis at $z=a$ and $z=b$ with all desired eigenvalues inside
$(a,b)$ and other eigenvalues outside (See Figure \ref{fig:1}
(left) for an example). Suppose $\{\sigma_j\}_{1 \leq j \leq p}$ and
$\{w_j\}_{1 \leq j \leq p}$ are the quadrature points and weights in
the discretization of the contour $\Gamma$, respectively, the contour
integral \eqref{eq:contourintegral} is discretized as a rational function
\begin{equation}
    R(x) = \sum_{j=1}^{p} \frac{w_j}{2 \pi \imath (x-\sigma_j)} =
   \alpha_0+ \sum_{j=1}^p  \frac{\alpha_j}{x-\sigma_j},
\end{equation}
where $\alpha_0 = 0$, and $\alpha_j = \frac{w_j}{2 \pi \imath}$
for $j = 1, 2, \dots, p$. Some other methods advanced with conformal
maps~\cite{Hale2008, Lin2009} and optimization~\cite{VanBarel2016,
Xi2016} can also provide good rational filters.

\subsection{Contribution}

Based on Zolotarev's best rational function approximations of the signum
function and conformal maps, we construct the best rational function
$R_{ab}(x)$ approximating a rectangular function supported on an arbitrary
interval $(a,b)$. The optimality in this paper is in terms of the uniform
approximation error among the class of rational functions of the same type.
Combining fast direct solvers and the shift-invariant generalized minimal
residual method~(GMRES), a hybrid fast algorithm is proposed to apply the
spectrum filter $R_{ab}(B^{-1}A)$ to given vectors.

Suppose $a \in (a_-, a_+)$ and $b \in (b_-, b_+)$ respectively, and no
eigenvalue lies in $(a_-, a_+)$ and $(b_-, b_+)$.  The proposed rational
filter $R_{ab}(x)$ is constructed via the composition of Zolotarev's
functions as follows
\begin{equation} \label{eq:Rab}
    R_{ab}(x) = \frac{ Z_{2r}( \hZ_{2r}( T(x); \ell_1 ); \ell_2 ) +1
    }{2},
\end{equation}
where $Z_{2r}( x; \ell )$ is the Zolotarev's function of type $(2r-1,2r)$,
$\hZ_{2r}( x; \ell )$ is the scaled Zolotarev's function
\begin{equation}
    \hZ_{2r}( x; \ell ) = \frac{ Z_{2r}( x; \ell ) }{ \max_{ x \in [
    \ell, 1 ] }{Z}_{2r}( x; \ell )},
\end{equation}
and $T(x)$ is a M\"obius transformation of the form
\begin{equation}
    T(x) = \gamma \frac{ x - \alpha }{ x - \beta }
\end{equation}
with $\alpha \in ( a_-, a_+ )$ and $\beta \in ( b_-, b_+ )$ such that
\begin{equation}
    T( a_- ) = -1, \quad T( a_+ ) = 1, \quad T( b_- ) = \ell_1, \quad
    \text{ and } T( b_+ ) = -\ell_1.
\end{equation}
In the above construction, the variables $\alpha$, $\beta$, $\gamma$,
$\ell_1$, and $\ell_2$ are all determined by $a_-$, $a_+$, $b_-$,
and $b_+$.

The novelty of the proposed rational filter in \eqref{eq:Rab} is to
construct a high-order rational function for an arbitrary interval via the
composition of two Zolotarev's functions and a M\"obius transformation. This
new construction can significantly improve the approximation accuracy for a
rectangular function approximation even if $r$ is small, as compared to
other methods via a single Zolotarev's functions in \cite{Guttel2015}.
Similar composition ideas have been applied to the signum function
approximation (e.g., polar decomposition of matrices \cite{Nakatsukasa2010},
full diagonalization of matrices \cite{Nakatsukasa2016}, and the density
matrix purification \cite{Mazziotti2003, Niklasson2002, Palser1998}), and
the square root function approximation for accelerating Heron?s iteration
\cite{Braess1984, Ninomiya1970}. After the completion of the investigation
described in this paper the authors became aware of a work \cite{Guttel2015}
that addresses a similar question about the optimal rational filter via
Zolotarev's functions. The main difference is that: we propose to construct
high-order rational functions via function compositions, while
\cite{Guttel2015} directly constructs the rational function without
compositions. Function composition can reduce the number of direct matrix
factorizations needed in the computation and hence would reduce the
computational time. This idea has not been explored yet for computing
interior eigenpairs and is the main contribution of our paper.

An immediate challenge arises from applying the composition of functions
$R_{ab}(B^{-1}A)$ in \eqref{eq:Rab} to given vectors when $A$ and
$B$ are sparse matrices. Directly computing $R_{ab}(B^{-1}A)$ will
destroy the sparsity of $A$ and $B$ since $R_{ab}(B^{-1}A)$ is dense.
Fortunately, the function composition structure in \eqref{eq:Rab}
admits a hybrid fast algorithm for the matrix-vector multiplication
(matvec) $R_{ab}(B^{-1}A)V$, where $A$ and $B$ are sparse Hermitian
matrices of size $N$ with $O(N)$ nonzero entries, $B$ is positive
definite, and $V$ is a tall skinny matrix of size $N$ by $O(1)$. We
apply the multifrontal method~\cite{Duff1983,Liu1992} to solve the
sparse linear systems involved in $\hZ_{2r}( T( B^{-1} A ); \ell_1 )V$.
The multifrontal method consists of two parts: the factorization
of sparse matrices and the application of the factorization. Once
sparse factors have been constructed, evaluating $\hZ_{2r}( T( B^{-1}
A ); \ell_1 )V$ is efficient; in this sense, the multifrontal method
converts the dense matrix $\hZ_{2r}( T( B^{-1} A ); \ell_1 )$ into
an operator with a fast application. Since the Zolotarev's function
well approximates the signum function, the matrix $\hZ_{2r}( T( B^{-1}
A ); \ell_1 )$ has a condition number close to $1$.  Therefore, the
computation $Z_{2r}( \hZ_{2r}(T( B^{-1} A ); \ell_1 ); \ell_2 ) V$
can be carried out efficiently by the GMRES iterative method. As we
shall see later, by the shift-invariant property of Krylov subspace,
the computational time can be further reduced in the GMRES.

When we incorporate the above hybrid fast algorithm into the subspace
iteration method, the factorization time of the multifrontal method can
be treated as precomputation, since all the multi-shift linear systems
in every iteration remain unchanged. Since $Z_{2r}( \hZ_{2r}(T( B^{-1}
A ); \ell_1 ); \ell_2 )$ is able to approximate the desired spectrum
projector of $(A,B)$ accurately, the subspace iteration method usually
needs only one or two iterations to identify desired eigenpairs up
to an $10^{-10}$ relative error. Hence, the dominant computing time
in the proposed interior eigensolver is the factorization time in the
multifrontal method.

\subsection{Organization}

In what follows, we introduce the subspace iteration, the best rational
filter, and the hybrid fast algorithm in Section \ref{sec:ss}. In Section
\ref{sec:results}, extensive numerical examples of a wide range of sparse
matrices are presented to demonstrate the efficiency of the proposed
algorithms. Finally, we conclude this paper with a short discussion in
Section \ref{sec:discussion}.

\section{Algorithm}
\label{sec:ss}

First, we recall a standard subspace iteration accelerated by a
rational filter for interior generalized eigenvalue problems in
Section~\ref{sub:view}. Second, we introduce the best rational filter
$R_{ab}(x)$ in \eqref{eq:Rab} and show its efficiency of approximating
the rectangular function $S_{ab}(x)$ on the interval
\begin{equation}
    ( -\infty, a_- ] \cup [ a_+, b_- ] \cup [ b_+, \infty ),
\end{equation}
where $(a_-,a_+)$ and $(b_-,b_+)$ are eigengaps around $a$ and $b$,
i.e., there is no eigenvalue inside these two intervals. Third, the
hybrid fast algorithm for evaluating the matvec $R_{ab}(B^{-1}A)V$
is introduced in Section \ref{sub:matvec}.

\begin{table}[htp]
    \centering
    \begin{tabular}{l|l}
        \toprule
        Notation & Description \\
        \toprule
        $N$ & Size of the matrix \\
        $\bbF$ & Either $\bbR$ or $\bbC$ \\
        $A, B$ & Sparse Hermitian definite matrix of size $N \times N$ \\
        $(A,B)$ & Matrix pencil \\
        $(a,b)$ & Interval of interest on the spectrum of $(A,B)$ \\
        $(a_-,a_+), (b_-,b_+)$ & Eigengaps around $a$ and $b$ respectively
        \\
        $\nlam$ & Number of eigenvalues in the interval \\
        $k$ & Oversampling constant \\
        \bottomrule
    \end{tabular}
    \caption{Commonly used notations.}
    \label{tab:notations}
\end{table}

Throughout this paper, we adopt MATLAB notations for submatrices and
indices.  Besides usual MATLAB notations, we summarize a few notations
that would be used in the rest of the paper without further explanation
in Table~\ref{tab:notations}.

\subsection{Subspace iteration with rational filters}\label{sub:view}

Various subspace iteration methods have been proposed and analyzed
in the literature. For the completeness of the presentation,
we introduce a standard one in conjunction of a rational filter in 
Algorithm~\ref{alg:subiter} 
for the interior generalized eigenvalue problem for a matrix pencil
$(A,B)$ on a spectrum interval $(a,b)$.

\begin{algorithm2e}[H]

\DontPrintSemicolon
\SetAlgoNoLine
\SetKwInOut{Input}{input}
\SetKwInOut{Output}{output}

    \Input{Sparse Hermitian matrix pencil $(A, B)$, a spectrum range
    $(a,b)$, the number of eigenpairs $\nlam$, and a rational filter 
    $R_{ab}(x)$}

    \Output{A diagonal matrix $\Lambda$ with diagonal entries being
    the eigenvalues of $(A,B)$ on $(a,b)$, $V$ are the corresponding
    eigenvectors}

    Generate orthonormal random vectors $Q \in \bbF^{N \times
    (\nlam+k)}$.

    \While{ not convergent\footnote{Locking can be applied in the iteration.} }{

        $Y = R_{ab}(B^{-1}A) Q$
 
	    Compute $\tilde{A}=Y^{*}AY$ and $\tilde{B}=Y^{*}BY$
 
	    Solve $\widetilde{A} \widetilde{Q} = \widetilde{\Lambda}
	    \widetilde{B} \widetilde{Q}$ for $\widetilde{\Lambda}$ and
	    $\widetilde{Q}$
 
        Update $Q =  Y \widetilde{Q}$

    }

    $\calI = \{ i \mid a < \widetilde{\Lambda}(i,i) < b \}$

    $\Lambda = \widetilde{\Lambda}(\calI, \calI)$

    $V = Q(:, \calI)$

\caption{A standard subspace iteration method}
\label{alg:subiter}

\end{algorithm2e}

The main cost in Algorithm~\ref{alg:subiter} is to compute $Y =
R_{ab}(B^{-1}A)Q$, since any other steps scale at most linearly in $N$
or even independent of $N$. If the rational function $R_{ab}(x)$ is not
a good approximation to the rectangular function $S_{ab}(x)$, it might
take many iterations for Algorithm~\ref{alg:subiter} to converge. Our
goal is to get an accurate rational function approximation $R_{ab}(x)$
so that only a small number of iterations is sufficient to estimate
the eigenpairs of $(A,B)$ with machine accuracy.  The method to achieve
the goal will be discussed in the next two subsections.

\subsection{Best rational filter by Zolotarev's functions}
\label{sub:high}


In what follows, we introduce basic definitions and theorems for
rational function approximations. Let $\calP_r$ denote the set of all
polynomials of degree $r$. A rational function $R(x)$ is said to be of
type $(r_1,r_2)$ if $R(x)=\frac{P(x)}{Q(x)}$ with $P(x) \in \calP_{r_1}$
and $Q(x) \in \calP_{r_2}$. We denote the set of all rational functions
of type $(r_1,r_2)$ as $\calR_{r_1,r_2}$. For a given function $f(x)$
and a rational function $R(x)$, the approximation error in a given
domain $\Omega$ is quantified by the infinity norm
\begin{equation}
    \norm{ f - R }_{L^\infty(\Omega)}=\sup_{x\in\Omega} \abs{f(x)-R(x)}.
\end{equation}
A common problem in the rational function approximation is the minimax
problem that identifies $R(x) \in \calR_{r_1,r_2}$ satisfying
\begin{equation}
    R = \arg \min_{g \in \calR_{r_1,r_2}} \norm{f-g}_{L^\infty(\Omega)}.
\end{equation}

More specifically, the minimax problem of interest for matrix computation is either
\begin{equation}
    R = \arg \min_{ g \in \calR_{2r-1,2r}} \norm{ \sign{x} - g(x)
    }_{L^\infty([-1,-\ell] \cup [\ell,1])},
    \label{eq:ZoloMinimax}
\end{equation}
where $r$ is a given integer and $\ell \in (0,1)$ is a given parameter, or
\begin{equation}
    R = \arg \min_{ g \in \calR_{(2r)^2,(2r)^2}} \norm{ S_{ab}(x) - g(x)
    }_{L^\infty((-\infty,a_-) \cup (a_+,b_-) \cup (b_+,\infty))},
    \label{eq:ZoloMinimax2}
\end{equation}
where $r$ is a given integer, $a_-<a$ and $a_+>a$ are two parameters around
$a$, $b_-<b$ and $b_+>b$ are two parameters around $b$. The problem in
\eqref{eq:ZoloMinimax} with $g$ of the particular type $\calR_{2r-1,2r}$,
has a unique solution and the explicit expression of the solution is given
by Zolotarev~\cite{Zolotarev1877}. We denote this best rational
approximation to the signum function by $Z_{2r}(x;\ell)$. To be more
precise, the following theorem summarizes one of Zolotarev's conclusions
which is rephrased by Akhiezer in Chapter $9$ in \cite{Akhiezer1990}, and by
Petrushev and Popov in Chapter $4.3$ in \cite{Petrushev1987}.
\begin{theorem}[Zolotarev's function]\label{thm:zoloSign}
    The best uniform rational approximant of type $(2r, 2r)$ for the
    signum function $\sign{x}$ on the set $[-1,-\ell] \cup [\ell,1]$,
    $0 < \ell < 1$, is given by
    \begin{equation}
        Z_{2r}(x; \ell) := M x \frac{ \prod_{j=1}^{r-1}(x^2+c_{2j})}
        {\prod_{j=1}^{r}(x^2+c_{2j-1})} \in \calR_{2r-1,2r},
        \label{eq:ZoloFunc}
    \end{equation}
    where $M>0$ is a unique constant such that
    \begin{equation}
        \min_{x \in [-1,-\ell]} Z_{2r}(x; \ell) + 1 = \min_{x \in
        [\ell,1]} Z_{2r}(x; \ell) - 1.
    \end{equation}
    The coefficients $c_1, c_2, \dots, c_{2r-1}$ are given by
    \begin{equation}\label{eqn:coef}
        c_j = \ell^2 \frac{ \sn^2 \left( \frac{jK'}{2r}; \ell' \right)}
        { \cn^2 \left( \frac{jK'}{2r}; \ell' \right)}, \quad j = 1, 2,
        \dots, 2r-1,
    \end{equation}
    where $\sn( x; \ell' )$ and $\cn( x; \ell' )$ are the Jacobi elliptic
    functions (see \cite{Akhiezer1956,Akhiezer1990}), $\ell' =
    \sqrt{1-\ell^2}$, and $K' = \int_0^{\pi/2} \frac{\diff \theta}{\sqrt{
    1 - (\ell')^2 \sin^2 \theta }}$.
\end{theorem}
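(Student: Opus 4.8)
The plan is to certify optimality through the rational Chebyshev (equioscillation) theorem and then to produce the extremal function explicitly from Jacobi elliptic functions, so that the algebraic shape \eqref{eq:ZoloFunc} and the coefficients \eqref{eqn:coef} both emerge from a single order-$2r$ transformation of elliptic functions. First I would reduce the problem using symmetry. Because $\sign{x}$ is odd and the set $[-1,-\ell]\cup[\ell,1]$ is symmetric about the origin, whenever $g$ solves \eqref{eq:ZoloMinimax} so does $x\mapsto-g(-x)$; once uniqueness is in hand this forces the minimizer to be odd. An odd rational function of type $(2r,2r)$ must have an odd numerator and an even denominator, so it automatically lies in $\calR_{2r-1,2r}$ and can be written as $Mx\prod_{j=1}^{r-1}(x^2+c_{2j})/\prod_{j=1}^{r}(x^2+c_{2j-1})$. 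This already pins down the structural form in \eqref{eq:ZoloFunc} and reduces the problem to locating the coefficients $\{c_j\}$ and the scale $M$; by oddness it then suffices to control the error on $[\ell,1]$, where we are approximating the constant $1$.

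Next I would invoke the alternation characterization for rational approximation: a best approximant from $\calR_{2r-1,2r}$ is the unique function whose error attains its maximal deviation with alternating signs at the number of nodes dictated by the type once the oddness symmetry has been factored out. Uniqueness of the minimax solution follows from the same theorem, which justifies the oddness assumption used above. The remaining and central task is therefore purely constructive: build a candidate of the stated form whose error on $[\ell,1]$ equioscillates the requisite number of times (three points for $r=1$, and $2r+1$ in general), after which optimality and uniqueness follow automatically.

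The heart of the argument, and the main obstacle, is the explicit elliptic construction. Following Zolotarev, I would pass to the ratio $g(x)=\dfrac{1-R(x)}{1+R(x)}$, which for odd $R$ satisfies $g(-x)=1/g(x)$ and is of type $(2r,2r)$; equioscillation of $R$ is then equivalent to $\abs{g}$ being a constant $\rho<1$ on $[\ell,1]$ and equal to $1/\rho$ on $[-1,-\ell]$. This is precisely the classical extremal problem for rational functions of prescribed constant modulus on two intervals, whose solution is a Zolotarev fraction expressible through Jacobi elliptic functions of modulus $\ell'=\sqrt{1-\ell^2}$. Concretely, an elliptic substitution uniformizes the two-interval configuration so that the level sets $\{\abs{g}=\text{const}\}$ are exactly the two intervals; the order-$2r$ transformation, which divides the period ratio by $2r$, places the zeros and poles of $R$ at the images of the equally spaced arguments $v=jK'/(2r)$, $j=1,\dots,2r-1$. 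Unwinding the substitution then yields $c_j=\ell^2\,\sn^2(jK'/(2r);\ell')/\cn^2(jK'/(2r);\ell')$, i.e.\ \eqref{eqn:coef}. The delicate points are showing that the constant-modulus problem has a unique solution of the correct type and that the order-$2r$ transformation produces exactly $2r-1$ coefficients; this is where the periodicity, monotonicity, and addition formulas of $\sn$ and $\cn$ carry the load.

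Finally I would fix the normalization. The constant $M>0$ is chosen as the unique value making the downward excursion of $Z_{2r}$ below $1$ on $[\ell,1]$ equal in magnitude to its excursion above $-1$ on $[-1,-\ell]$, i.e.\ the stated balancing condition $\min_{x\in[-1,-\ell]}Z_{2r}(x;\ell)+1=\min_{x\in[\ell,1]}Z_{2r}(x;\ell)-1$; this is exactly what converts the oscillation produced by the elliptic construction into a genuine equioscillation with equal positive and negative deviations. With the alternation count realized and $M$ so determined, the alternation theorem certifies $Z_{2r}(\cdot;\ell)$ as the unique best approximant, completing the proof.
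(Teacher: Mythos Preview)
The paper does not prove Theorem~\ref{thm:zoloSign} at all. It is stated there as a classical result due to Zolotarev, with pointers to Chapter~9 of Akhiezer and Section~4.3 of Petrushev--Popov for the derivation; no argument is given in the paper itself. Your proposal therefore goes well beyond what the paper does: you are sketching the actual classical proof rather than comparing against one.

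As an outline of that classical proof, your plan is sound and follows the standard route. The symmetry reduction to odd functions, the invocation of the rational alternation theorem for both characterization and uniqueness, the Cayley-type substitution $g=(1-R)/(1+R)$ converting the equioscillation condition into a constant-modulus problem on two intervals, and the resolution of that problem via an order-$2r$ elliptic transformation with modulus $\ell'$ are exactly the ingredients found in the references the paper cites. Your identification of the delicate steps (uniqueness of the constant-modulus solution in the correct type, and the count of $2r-1$ coefficients coming out of the period subdivision) is accurate; these are handled in Akhiezer by explicit computation with the elliptic addition formulas. The final normalization via $M$ is also correctly described. So nothing is missing conceptually, but be aware that for the purposes of this paper the theorem is simply quoted, not proved.
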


By Add. E in \cite{Akhiezer1956}, the maximum approximation error
$\delta(2r,\ell):=\norm{ \sign{x} - Z_{2r}( x; \ell )}_{L^\infty}$
is attained at $2r+1$ points $x_1:=\ell<x_2<\dots<x_{2r}<x_{2r+1}:=1$
on the interval $[\ell,1]$ and also $2r+1$ points $x_{-j}:=-x_j$,
$j=1,2,\dots,2r+1$, on the interval $[-1,-\ell]$. The function
$\sign{x}-Z_{2r}(x;\ell)$ equioscillates between the $x_j$'s; in
particular,
\begin{equation}
    1 - Z_{2r}(x_j; \ell) = (-1)^{j+1} \delta(2r, \ell), \quad j = 1, 2,
    \dots, 2r+1.
\end{equation}

The approximation error of Zolotarev's functions as an approximant to
$\sign{x}$ decreases exponentially with degree $2r$ (\cite{Petrushev1987}
Section 4.3), i.e.
\begin{equation}
    \delta(2r,\ell)\approx C \rho^{-2r}
\end{equation}
for some positive $C$ and $\rho > 1$ that depends on $\ell$. In more
particular, Gon\v{c}ar~\cite{Goncar1969} gave the following quantitative
estimation on the approximation error, $\delta(2r,\ell)$:
\begin{equation}
    \frac{2}{ \rho^{2r} + 1 } \leq \delta(2r, \ell) \leq \frac{2}{
    \rho^{2r} - 1 },
\end{equation}
where
\begin{equation}\label{eqn:rho}
    \rho = \exp \left( \frac{ \pi K(\mu') }{ 4K(\mu)} \right),
\end{equation}
$\mu = \frac{ 1 - \sqrt{ \ell }}{ 1 + \sqrt{\ell} }$, $\mu' = \sqrt{ 1
- \mu^2 }$, and $K(\mu) = \int_0^{\pi/2} \frac{ \diff \theta }{ \sqrt{
1 - (\mu)^2 \sin^2 \theta }}$ is the complete elliptic integral of the
first kind for the modulus $\mu$.

Even though the approximation error $\delta(2r,\ell)$ decreases
exponentially in $r$, the decay rate of $\delta(2r,\ell)$ in $r$ might
still be slow if $\rho$ is small. In fact, $\rho$ could be small in
many applications when eigengaps are small. As we shall see later,
if the eigenvalues cluster together, $\ell$ should be very small and
hence $\rho$ is small by \eqref{eqn:rho}. As we have discussed earlier
in the introduction of this paper, it is not practical to use a large
$r$ due to the computational expense and numerical instability. This
motivates the study of the composition of Zolotarev's functions in
$\calR_{2r-1,2r}$, which constructs a high order Zolotarev's function
in $\calR_{(2r)^2-1,(2r)^2}$. Such a composition has a much smaller
approximation error
\begin{equation}
    \delta(4r^2,\ell)\approx C \rho^{-4r^2}.
\end{equation}

For simplicity, let us use the rescaled Zolotarev's function defined by
\begin{equation}\label{eqn:rZ}
    \hZ_{2r}(x; \ell) = \frac{ Z_{2r}(x; \ell) }{ \max_{x \in [\ell,1]}
    {Z}_{2r}(x; \ell) }.
\end{equation}
Note that $\max_{x \in [\ell,1]}\hZ_{2r}(x; \ell) = 1$, and
$\hZ_{2r}(x; \ell)$ maps the set $[-1, -\ell] \cup [\ell, 1]$ onto
$[-1, -\hZ_{2r}(\ell; \ell)] \cup [\hZ_{2r}(\ell; \ell), 1]$. Hence,
if one defines a composition via
\begin{equation}\label{eqn:S}
    S(x; \ell_1) = Z_{2r}( \hZ_{2r}(x; \ell_1); \ell_2),
\end{equation}
where $\ell_2=\hZ_{2r}(\ell_1; \ell_1)$, then $S(x; \ell_1) \in
\calR_{(2r)^2-1,(2r)^2}$ is the best uniform rational approximant
of type $((2r)^2, (2r)^2)$ for the signum function $\sign{x}$ on the
set $[-1,-\ell_1] \cup [\ell_1,1]$. This optimal approximation is an
immediate result of a more general theorem as follows.

\begin{theorem}\label{thm:comp}
    Let $\hZ_{2r_1}(x; \ell_1) \in \calR_{2r_1-1, 2r_1}$ be the rescaled
    Zolotarev's function corresponding to $\ell_1 \in (0,1)$, and
    $Z_{2r_2}(x; \ell_2) \in \calR_{2r_2-1, 2r_2}$ be the Zolotarev's
    function corresponding to $\ell_2 := \hZ_{2r_1}(\ell_1; \ell_1)$.
    Then
    \begin{equation}\label{eqn:Z}
        Z_{2r_2}( \hZ_{2r_1}( x; \ell_1); \ell_2 ) = Z_{(2r_1)(2r_2)}(x;
        \ell_1).
    \end{equation}
\end{theorem}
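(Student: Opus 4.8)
The plan is to recognize the composition $Z_{2r_2}(\hZ_{2r_1}(x;\ell_1);\ell_2)$ as the unique best uniform approximant to $\sign{x}$ of type $((2r_1)(2r_2)-1,(2r_1)(2r_2))$ on $[-1,-\ell_1]\cup[\ell_1,1]$, which by Theorem~\ref{thm:zoloSign} is precisely $Z_{(2r_1)(2r_2)}(x;\ell_1)$. Two ingredients are required: (i) the composition lies in $\calR_{(2r_1)(2r_2)-1,(2r_1)(2r_2)}$, and (ii) its error equioscillates at enough points that the Chebyshev characterization of best rational approximation, together with the uniqueness of the minimizer, forces the identity \eqref{eqn:Z}. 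For (i) I would substitute the product form $\hZ_{2r_1}=N_1/D_1$ with $\deg N_1=2r_1-1$ and $\deg D_1=2r_1$ into the explicit expression \eqref{eq:ZoloFunc} for $Z_{2r_2}$. Clearing the powers of $D_1$ and using that each factor $N_1^2+d\,D_1^2$, with $d>0$ a positive coefficient from \eqref{eqn:coef}, has degree $4r_1$ (its leading term comes from $D_1^2$, so no cancellation), a direct count gives numerator degree $(2r_1-1)+2r_1+(r_2-1)\cdot 4r_1=(2r_1)(2r_2)-1$ and denominator degree $r_2\cdot 4r_1=(2r_1)(2r_2)$. Hence the composition is of type at most $((2r_1)(2r_2)-1,(2r_1)(2r_2))$, and this step is routine bookkeeping.

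The heart of the argument is (ii), and it hinges on the matching condition $\ell_2=\hZ_{2r_1}(\ell_1;\ell_1)$. By the equioscillation property recalled after Theorem~\ref{thm:zoloSign}, on $[\ell_1,1]$ the function $Z_{2r_1}(x;\ell_1)$ attains its extreme values $1\pm\delta(2r_1,\ell_1)$ alternately at the $2r_1+1$ nodes $\ell_1=x_1<\dots<x_{2r_1+1}=1$, taking the minimum $1-\delta(2r_1,\ell_1)$ at the odd-indexed nodes (including both endpoints) and the maximum $1+\delta(2r_1,\ell_1)$ at the even-indexed nodes. After rescaling by $\max_{[\ell_1,1]}Z_{2r_1}=1+\delta(2r_1,\ell_1)$, the function $\hZ_{2r_1}(\cdot;\ell_1)$ maps $[\ell_1,1]$ onto $[\ell_2,1]$ and is composed of $2r_1$ monotone pieces, alternately increasing and decreasing, each sweeping $[\ell_2,1]$ exactly once. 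On this range the outer function $Z_{2r_2}(\cdot;\ell_2)$ is, by construction, the Zolotarev approximant whose error $1-Z_{2r_2}(y;\ell_2)$ equioscillates with amplitude $\delta(2r_2,\ell_2)$ at its own $2r_2+1$ nodes $\ell_2=y_1<\dots<y_{2r_2+1}=1$. Pulling these nodes back through each monotone piece of $\hZ_{2r_1}$ produces $2r_2+1$ points on that piece at which the composed error $1-Z_{2r_2}(\hZ_{2r_1}(x;\ell_1);\ell_2)$ equals $\pm\delta(2r_2,\ell_2)$, while $\abs{1-Z_{2r_2}(\hZ_{2r_1}(x;\ell_1);\ell_2)}\le\delta(2r_2,\ell_2)$ throughout, since $\hZ_{2r_1}(x;\ell_1)\in[\ell_2,1]$ for every $x\in[\ell_1,1]$.

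It then remains to count. Adjacent monotone pieces of $\hZ_{2r_1}$ share a breakpoint at which $\hZ_{2r_1}$ equals $1$ or $\ell_2$, and at both of these endpoints $Z_{2r_2}$ takes the same extreme value $1-\delta(2r_2,\ell_2)$; hence each shared node carries a single, consistently-signed extremum and the composed error keeps alternating across the breakpoint. The $2r_1$ pieces therefore contribute $2r_1(2r_2+1)-(2r_1-1)=(2r_1)(2r_2)+1$ equioscillation points on $[\ell_1,1]$. Since $\hZ_{2r_1}$ and $Z_{2r_2}$ are odd, the composition is odd, the configuration reflects to $[-1,-\ell_1]$, and the two blocks join with alternating signs across the gap at the origin, giving $2(2r_1)(2r_2)+2$ alternations in total. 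This exceeds the number $((2r_1)(2r_2)-1)+(2r_1)(2r_2)+2$ demanded by the Chebyshev criterion for a best approximant of type $((2r_1)(2r_2)-1,(2r_1)(2r_2))$; combined with the membership from (i) and the uniqueness of the best rational approximant, it identifies the composition with $Z_{(2r_1)(2r_2)}(x;\ell_1)$. I expect the main obstacle to be precisely this bookkeeping—verifying that the error signs agree at the shared breakpoints so that all $2(2r_1)(2r_2)+2$ extrema form one genuine alternation set, rather than producing two equal-signed neighbours that would spoil the count.
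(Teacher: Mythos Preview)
Your proposal is correct. The paper does not actually supply a proof of Theorem~\ref{thm:comp}; it only remarks that the argument is ``similar to Theorem 3 in \cite{Nakatsukasa2016}'' and leaves the details to the reader. The proof you have written is precisely that argument: degree bookkeeping to place the composition in $\calR_{(2r_1)(2r_2)-1,(2r_1)(2r_2)}$, followed by the equioscillation count obtained by pulling back the $2r_2+1$ alternation nodes of the outer Zolotarev function through each of the $2r_1$ monotone branches of the rescaled inner one, merging shared endpoints, and then doubling by oddness. Your self-identified ``main obstacle'' is not in fact an obstacle: since $1-Z_{2r_2}(\ell_2;\ell_2)$ and $1-Z_{2r_2}(1;\ell_2)$ both equal $+\delta(2r_2,\ell_2)$ (both endpoint indices $j=1$ and $j=2r_2+1$ give $(-1)^{j+1}=+1$), the last extremum of one monotone piece and the first of the next share the same sign, and after deleting the duplicate the alternation continues without a break. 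The junction across the gap at the origin also alternates, as you note, by oddness of the composition. So the $2(2r_1)(2r_2)+2$ alternations are genuine, and the Chebyshev characterization plus uniqueness finishes the identification with $Z_{(2r_1)(2r_2)}(x;\ell_1)$.
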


The proof of Theorem \ref{thm:comp} is similar to Theorem 3 in
\cite{Nakatsukasa2016}. Hence, we leave it to readers.

Finally, given a desired interval $(a,b)$ and the corresponding
eigengaps, $(a_-, a_+)$ and $(b_-, b_+)$, to answer the best rational function 
approximation in \eqref{eq:ZoloMinimax2}, we construct a uniform
rational approximant $R_{ab}(x)\in \calR_{(2r)^2, (2r)^2}$ via the
M\"obius transformation $T(x)$ as follows
\begin{equation}\label{eqn:R2}
    R_{ab}(x) = \frac{ S( T(x); \ell_1 ) + 1 }{2} = \frac{ Z_{2r}(
    \hZ_{2r}( T(x); \ell_1 ); \ell_2) + 1 }{2},
\end{equation}
where $\ell_2 = \hZ_{2r}( \ell_1; \ell_1 )$ and
\begin{equation}\label{mo}
    T(x) = \gamma \frac{ x - \alpha }{ x - \beta }
\end{equation}
with $\alpha \in (a_-,a_+)$ and $\beta \in (b_-, b_+)$ such that
\begin{equation}\label{eqn:req}
    T(a_-) = -1, \quad T(a_+) = 1, \quad T(b_-) = \ell_1, \quad \text{
    and } T(b_+) = -\ell_1.
\end{equation}
We would like to emphasize that the variables $\alpha$, $\beta$,
$\gamma$, $\ell_1$, and $\ell_2$ are determined by $a_-$, $a_+$,
$b_-$, and $b_+$ via solving the equations in \eqref{eqn:req} in the
above construction.  In practice, $a_-$, $a_+$, $b_-$, and $b_+$ can
be easily calculated from $a$ and $b$. We fixed the buffer region
$(a_-, a_+) \cup (b_-, b_+)$ first according to the eigengaps of
target matrices and construct a M\"obius transformation adaptive to
this region. This adaptive idea is natural but does not seem to have
been considered before in the literature. Following Corollary $4.2$ in
\cite{Guttel2015} on can easily prove that when $a_{-}=-\frac{1}{c}$,
$a_{+}=-c$, $b_-=c$, and $b_+=\frac{1}{c}$ for some $c>0$, the
rational function in \eqref{eqn:R2} is the best rational function
approximation to the step function $S_{-cc}(x)$ among all the rational
functions in $\{R(T(x)):R(x)\in\mathcal{R}_{(2r)^2-1,(2r)^2}\}\subset
\mathcal{R}_{(2r)^2,(2r)^2}$, where $T(x)$ is the M\"obius transformation
satisfying \eqref{eqn:req}. The following theorem shows that $R_{ab}(x)$
in \eqref{eqn:R2} is indeed the best uniform rational approximant of
type $((2r)^2, (2r)^2)$ for more general $a_-$, $a_+$, $b_-$, and $b_+$
among a larger class of rational functions.  \cite{Guttel2015} proved a similar theorem very briefly and our proof of Theorem \ref{thm:step} is different to that of \cite{Guttel2015}. The main purpose of our proof below is to make the paper self-contained.

%
%

\begin{theorem}\label{thm:step}
    The rational function $R_{ab}(x)$ given in \eqref{eqn:R2} satisfies
    the following properties:
    \begin{enumerate}[1)]
	    
	\item $R_{ab}(x)$ is the best uniform rational approximant of type
	$((2r)^2, (2r)^2)$ of the rectangular function $S_{ab}(x)$ on
    \begin{equation}
        \Omega = (-\infty, a_-] \cup [a_+, b_-] \cup [b_+, \infty),
    \end{equation}
    where $(a_-, a_+)$ and $(b_-, b_+)$ are eigengaps.
        
    \item The error curve $e(x):=S_{ab}(x)-R_{ab}(x)$ equioscillates on
    $\Omega$ with the maximal error
    \begin{equation}
	    \delta_0 := \max_{x \in \Omega} \abs{e(x)} = \min_{ g \in
	    \calR_{(2r)^2, (2r)^2}} \norm{ S_{ab}(x) - g(x) }_{ L^\infty
	    (\Omega) }
    \end{equation}
	and
    \begin{equation}
	    \frac{2}{ \rho^{(2r)^2} + 1 } \leq \delta_0 \leq
	    \frac{2}{ \rho^{(2r)^2} - 1 }, \quad \rho = \rho( \ell_1 ) > 1,
    \end{equation}
    where
    \begin{equation*}
	    \rho( \ell_1 ) = \exp \left( \frac{ \pi K(\mu') }{ 4K(\mu)}
        \right),
    \end{equation*}
    $\mu = \frac{ 1 - \sqrt{\ell_1} }{ 1 + \sqrt{\ell_1} }$, $\mu' =
    \sqrt{ 1 - \mu^2 }$, and
	$K(\mu) = \int_0^{\pi/2} \frac{ \diff \theta }{ \sqrt{ 1 -
	\mu^2 \sin^2 \theta } }$ is the complete elliptic integral of
	the first kind for the modulus $\mu$.
    \end{enumerate}

\end{theorem}

\begin{proof}

Note that inserting a rational transformation of type $(1,1)$
into a rational function of type $((2r)^2-1,(2r)^2)$ results in a
rational function of type $((2r)^2,(2r)^2)$. Since $S(x;\ell_1)\in
\calR_{(2r)^2-1,(2r)^2}$ and M\"obius transform $T(x) \in \calR_{1,1}$,
we know $R_{ab}(x) \in \calR_{(2r)^2,(2r)^2}$. In the following proof,
we will first show that $R_{ab}(x)$ is the best uniform rational
approximant of type $((2r)^2,(2r)^2)$ to the rectangular function on
$\Omega$ and then derive the error estimator.

Suppose $R_{ab}(x)$  is not the best uniform rational approximant of
type $((2r)^2, (2r)^2)$ of the rectangular function $S_{ab}(x)$ on
\begin{equation}
    \Omega = (-\infty, a_-] \cup [a_+, b_-] \cup [b_+, \infty),
\end{equation}
then there exists another rational function $\tilde{R}(x)$ in
$\calR_{(2r)^2,(2r)^2}$ such that
\[
    \norm{ S_{ab}(x) - \tilde{R}(x) }_{ L^\infty (\Omega) } <  \norm{
    S_{ab}(x) - R_{ab}(x) }_{ L^\infty (\Omega) }.
\]
Let $T^{-1}(x)$ denote the inverse transform of the M\"obius
transformation $T(x)$ in \eqref{mo}, and we have $T^{-1}\in
\calR_{1,1}$.  Note that inserting a rational transformation of
type $(1,1)$ into a rational function of type $((2r)^2,(2r)^2)$
results in a rational function of type $((2r)^2,(2r)^2)$. Hence,
$2\tilde{R}(T^{-1}(x))-1$ is a rational function approximant in
$\calR_{(2r)^2,(2r)^2}$ of the signum function $\sign{x}$ on the set
$[-1,-\ell_1] \cup [\ell_1,1]$. Note that the M\"obius transformations
$T(x)$ and $T^{-1}(x)$ are bijective maps that do not change the
approximation errors, we have
\begin{eqnarray*}
    & &\norm{ \sign x - (2\tilde{R}(T^{-1}(x))-1) }_{ L^\infty
    ([-1,-\ell_1] \cup [\ell_1,1]) } = 2 \norm{ S_{ab}(x) - \tilde{R}(x)
    }_{ L^\infty (\Omega) }\\
    & < & 2 \norm{ S_{ab}(x) - R_{ab}(x) }_{ L^\infty (\Omega) } =
    \norm{ \sign x - S(x;\ell_1)  }_{ L^\infty ([-1,-\ell_1] \cup
    [\ell_1,1]) }.
\end{eqnarray*}
The inequality
\[
    \norm{ \sign x - (2\tilde{R}(T^{-1}(x))-1) }_{ L^\infty ([-1,-\ell_1]
    \cup [\ell_1,1]) }  <  \norm{ \sign x - S(x;\ell_1)  }_{ L^\infty
    ([-1,-\ell_1] \cup [\ell_1,1]) }
\]
conflicts with the fact that $S(x;\ell_1)\in \calR_{(2r)^2-1,(2r)^2}$
is the best rational approximant (among all rational functions of
type $((2r)^2,(2r)^2)$) of the signum function on $[-1,-\ell_1] \cup
[\ell_1,1]$ by \eqref{eqn:S} and \eqref{eqn:Z}. Hence, our
previous assumption that $R_{ab}(x)$  is not the best uniform rational
approximant of type $((2r)^2, (2r)^2)$ of the rectangular function
$S_{ab}(x)$ on $\Omega$ is false. This proves the first statement of
Theorem \ref{thm:step}.

The error inequalities in Property $2)$ follow from Gon\v{c}ar's
quantitative estimation on the approximation error of Zolotarev's
functions in \cite{Goncar1969} and the bijective transformation in
\eqref{mo}.

\end{proof}

An immediate result of Theorem~\ref{thm:step} is
\begin{equation}\label{eqn:err}
    \delta_0 = \norm{ S_{ab}(x)-R(x) }_{L^\infty (\Omega)} = C_{4r^2}
    \rho^{ -4r^2 },
\end{equation}
with $1 \leq \frac{2}{ 1 + \rho^{-(2r)^2} } \leq C_{4r^2} \leq \frac{2}{
1 - \rho^{-(2r)^2} }$.

\begin{figure}[htp]
    \centering
    \begin{subfigure}[t]{0.48\textwidth}
        \centering
        \includegraphics[width=\textwidth]{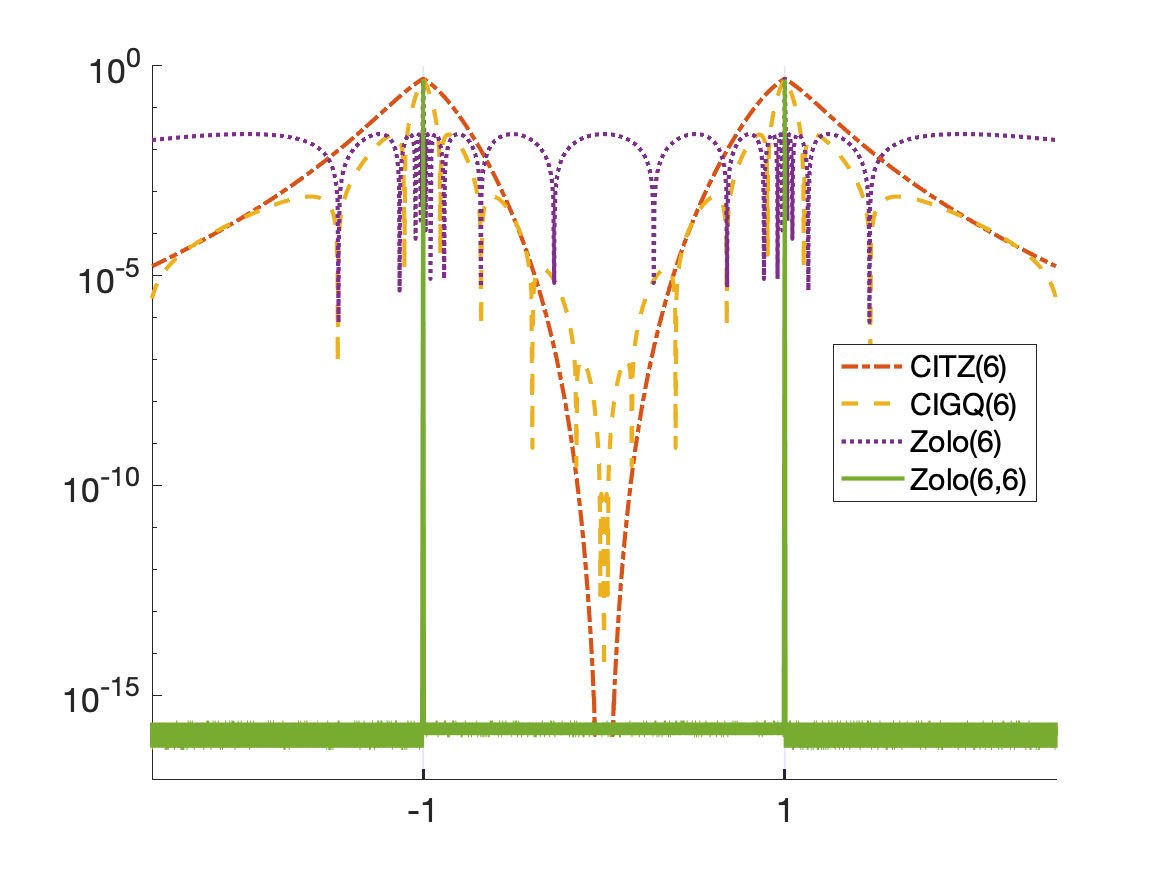}
        \caption{}
        \label{fig:ZoloErr6}
    \end{subfigure}
    \begin{subfigure}[t]{0.48\textwidth}
        \centering
        \includegraphics[width=\textwidth]{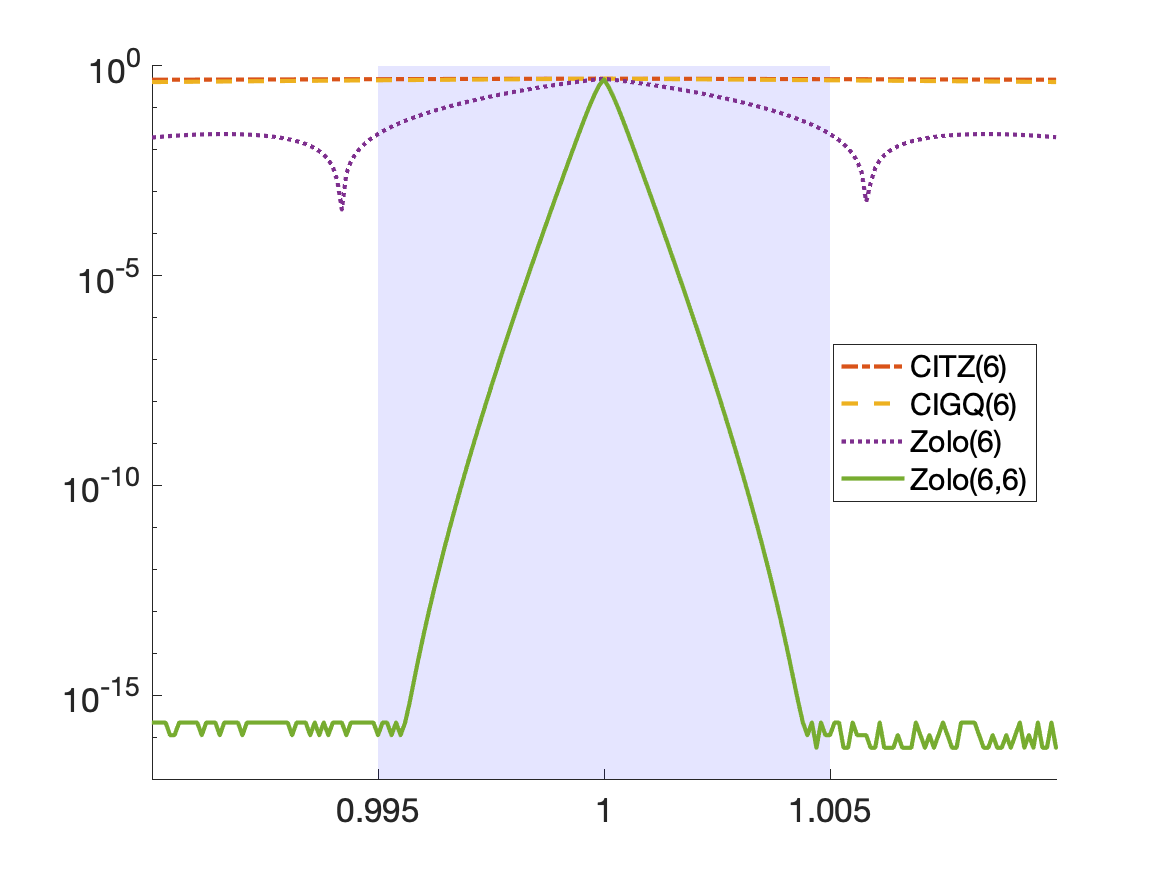}
        \caption{}
        \label{fig:ZoloErr6Zoom}
    \end{subfigure}

    \caption{ This figure shows the approximation error of various
    rational filters as an approximation of the rectangular function
    supported on $(-1,1)$. The eigengaps around $1$ and $-1$ is
    set to be $10^{-2}$. These functions include: the trapezoidal
    filter~\cite{Tang2014, Ye2016} (denoted as $\text{CITZ}(r)$,
    where $r$ is the number of poles), the Gauss-Legendre
    filter~\cite{Polizzi2009} (denoted as $\text{CIGQ}(r)$, where $r$
    is the number of poles), the Zolotarev approximation (denoted
    as $\text{Zolo}(r)$, where $r$ is the degree), and the proposed
    Zolotarev filter via compositions (denoted as $\text{Zolo}(r,r)$,
    where $r$ is the degree). (a) shows the approximation on $[-2.5,2.5]$
    and (b) zooms in on $[0.99,1.01]$.  Light purple areas are the buffer
    areas in which it is not necessary to condiser the approximation
    accuracy because of the eigengaps.}

    \label{fig:comp}
\end{figure}

\begin{figure}[htp]
    \centering
    \begin{subfigure}[t]{0.48\textwidth}
        \centering
        \includegraphics[width=\textwidth]{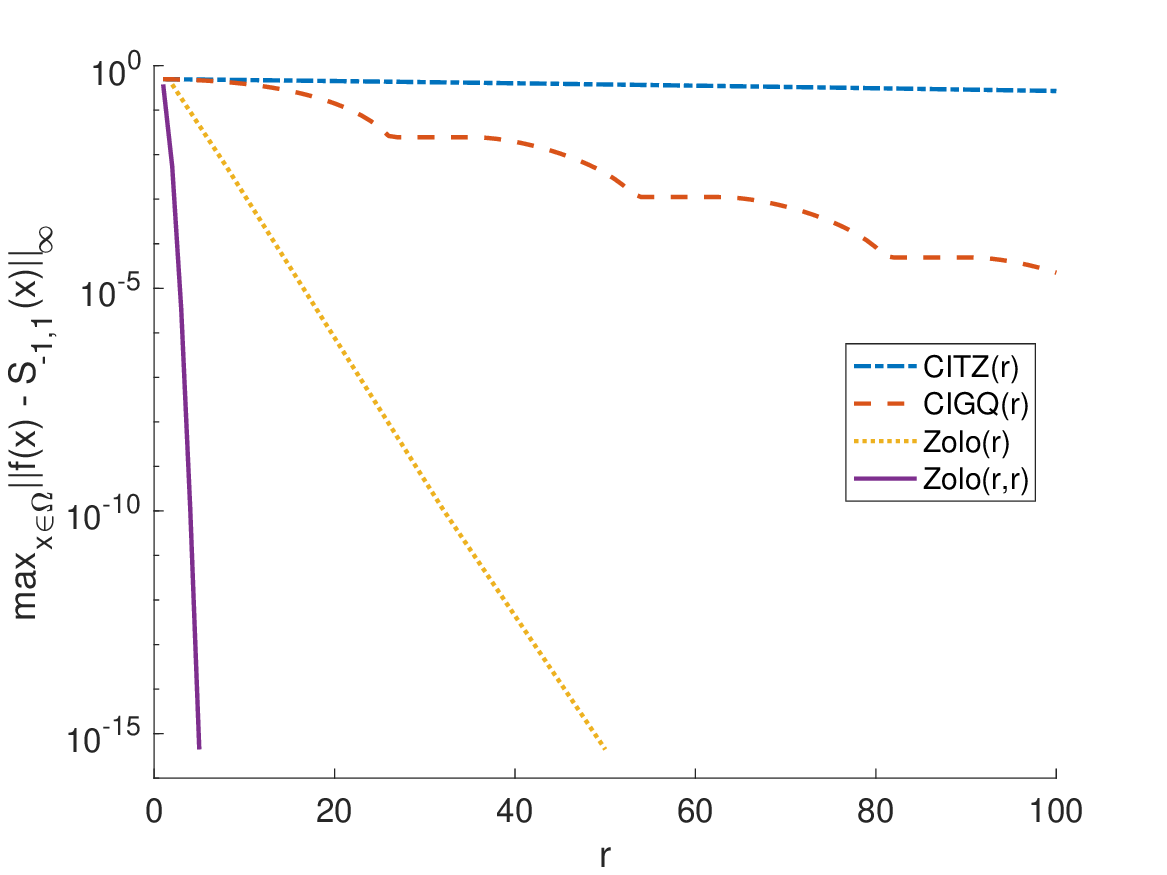}
        \caption{}
        \label{fig:ZoloErrDecay2}
    \end{subfigure}
    \begin{subfigure}[t]{0.48\textwidth}
        \centering
        \includegraphics[width=\textwidth]{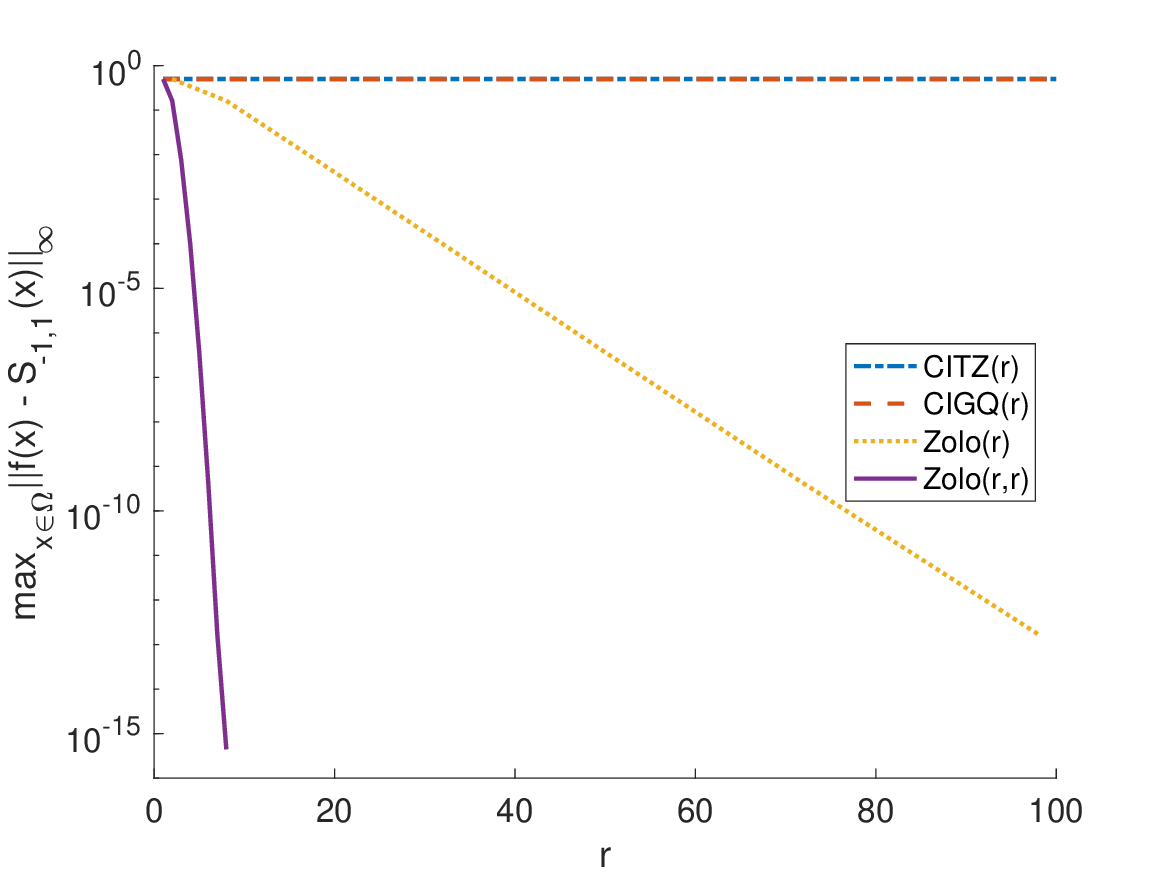}
        \caption{}
        \label{fig:ZoloErrDecay6}
    \end{subfigure}

    \caption{ This figure shows the approximation error against degree $r$
    for various rational functions as an approximation of the rectangular
    function supported on $(-1,1)$. The eigengaps around $-1$ and $1$ are
    set to be $10^{-2}$ in (a) and $10^{-6}$ in (b). The approximation
    errors of $\text{Zolo}(r,r)$ decay significantly faster than other
    methods. In (b), the line for CITZ is overwritten by that of CIGQ.}
    
    \label{fig:decay}

\end{figure}

To illustrate this improvement, we compare the performance of the
proposed rational filter in \eqref{eqn:R2} with other existing rational
filters that are constructed by discretizing the complex value contour
integral
\begin{equation}
    \pi(x) = \frac{1}{2\pi \imath} \oint_{\Gamma} \frac{1}{ x - z }\diff
    z,\quad x \notin \Gamma
\end{equation}
with an appropriate quadrature rule (e.g., the Gauss-Legendre
quadrature rule \cite{Polizzi2009} and the trapezoidal quadrature
rule \cite{Tang2014, Ye2016}). Since the dominant cost of applying
all these filters is the sparse matrix factorization, we fix the
number of matrices to be factorized and compare the approximation
error of various filters. The results in Figure \ref{fig:comp}
verifies the advantage of the proposed rational filter over existing
rational filters and shows that $6$ matrix factorizations are
enough to construct the composition of Zolotarev's rational function
approximating a rectangular function within a machine accuracy. Here
the eigengaps are $10^{-2}$. Figure~\ref{fig:decay} further explores
the decay for the errors in $L^\infty$ norm for different methods.
Figure~\ref{fig:ZoloErrDecay2} is the decay property for problem with
eigengaps $10^{-2}$ whereas Figure~\ref{fig:ZoloErrDecay6} shows the
decay property for problem with eigengaps $10^{-6}$.

\subsection{A hybrid algorithm for applying the best rational filter}
\label{sub:matvec}

In this section, we introduce a hybrid algorithm for applying the best
rational filter $R_{ab}(x)$ constructed in Section \ref{sub:high},
i.e., computing the matvec $R_{ab}(B^{-1}A) V$ when $A$ and $B$
are sparse Hermitian matrices in $\bbF^{N\times N}$ and $V$ is a
vector in $\bbF^{N}$.  Recall that the rational filter $R_{ab}(x)$
is constructed by
\begin{equation}\label{eqn:R3}
    R_{ab}(x) = \frac{ Z_{2r}( \hZ_{2r}( T(x); \ell_1 ); \ell_2 ) +
    1 }{2}.
\end{equation}
Hence, it is sufficient to show how to compute $Z_{2r}( \hZ_{2r}( T(
B^{-1}A ); \ell_1 ); \ell_2 ) V$ efficiently.

For the sake of numerical stability and parallel computing, a rational
function is usually evaluated via a partial fraction representation in
terms of a sum of fractions involving polynomials of low degree. For the
Zolotarev's function $Z_{2r}(x; \ell)$ introduced in \eqref{eq:ZoloFunc},
we have the following partial fraction representation\footnote{The existence of the partial fraction representation is well-known. We present our formulas for the representation for the purpose of making our algorithm easier to implement for researchers who are interested in our work.}. The reader is referred to 
Appendix for the proof.

\begin{proposition}\label{pro:p1}
    The function $Z_{2r}(x;\ell)$ as in \eqref{eq:ZoloFunc} can be
    reformulated as
    \begin{equation}\label{eqn:pa}
	Z_{2r}(x; \ell) = M x \frac{ \prod_{j = 1}^{r-1}( x^2 + c_{2j}
	)}{ \prod_{j = 1}^{r}( x^2 + c_{2j-1})} = M x \left( \sum_{j =
	1}^r \frac{a_j}{ x^2 + c_{2j-1} } \right),
    \end{equation}
    where
    \begin{equation}
    a_j = \frac{b_j}{ c_{2r-1} - c_{2j-1} }
    \end{equation}
    for $j = 1, \dots, r-1$, and
    \begin{equation}
    a_r = 1 - \sum_{j=1}^{r-1} \frac{b_j}{ c_{2r-1} - c_{2j-1} }.
    \end{equation}
    Here
    \begin{equation}
    b_j = (c_{2j} - c_{2j-1}) \prod_{ k = 1, k \neq j}^{r-1} \frac{
    c_{2k} - c_{2j-1} }{ c_{2k-1} - c_{2j-1} }
    \end{equation}
    for $j = 1, \dots, r-1$, $\{c_j\}$ and $M$ are given in
    \eqref{eqn:coef}.
\end{proposition}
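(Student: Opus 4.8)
The plan is to reduce the claim to a standard partial fraction decomposition via the substitution $y = x^2$. Writing
\[
    f(y) := \frac{\prod_{j=1}^{r-1}(y + c_{2j})}{\prod_{j=1}^{r}(y + c_{2j-1})},
\]
the identity to be proven is exactly $f(y) = \sum_{j=1}^r \frac{a_j}{y+c_{2j-1}}$; multiplying through by $Mx$ and restoring $y = x^2$ then recovers \eqref{eqn:pa}. Since the numerator has degree $r-1$ and the denominator degree $r$, the function $f$ is a \emph{proper} rational function, so its partial fraction expansion has no polynomial part. Moreover the poles $-c_{2j-1}$ are simple and pairwise distinct, because the $c_{2j-1}$ are the values of $\ell^2\,\sn^2/\cn^2$ at the distinct nodes $jK'/2r$ and $\sn^2/\cn^2$ is strictly monotone there; hence $f$ admits an expansion in simple fractions.

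First I would compute the residues by the cover-up rule, giving the coefficient of $1/(y+c_{2j-1})$ as
\[
    a_j = \frac{\prod_{k=1}^{r-1}(c_{2k} - c_{2j-1})}{\prod_{k=1,\,k\neq j}^{r}(c_{2k-1} - c_{2j-1})}.
\]
For $1 \le j \le r-1$ I would split off the $k=j$ factor $c_{2j}-c_{2j-1}$ from the numerator and the $k=r$ factor $c_{2r-1}-c_{2j-1}$ from the denominator; the remaining product over $k=1,\dots,r-1$ with $k\neq j$ is precisely $\prod_{k\neq j}\frac{c_{2k}-c_{2j-1}}{c_{2k-1}-c_{2j-1}}$. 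This exhibits $a_j = b_j/(c_{2r-1}-c_{2j-1})$ with $b_j$ as defined, so these coefficients are settled by routine index bookkeeping.

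The coefficient $a_r$ I would instead obtain from a normalization argument, which explains its asymmetric form in the statement. Examining the behavior as $y\to\infty$, the proper rational function satisfies $y\,f(y)\to 1$ because both numerator and denominator are monic, while on the expansion side $y\,f(y)=\sum_{j=1}^r \frac{a_j y}{y+c_{2j-1}}\to \sum_{j=1}^r a_j$. Equating the two limits yields $\sum_{j=1}^r a_j = 1$, hence $a_r = 1 - \sum_{j=1}^{r-1} a_j = 1 - \sum_{j=1}^{r-1}\frac{b_j}{c_{2r-1}-c_{2j-1}}$, exactly as claimed. The only genuinely thought-requiring step is recognizing that $a_r$ is most cleanly obtained from this sum-to-one relation (reusing the already-computed $b_j$) rather than from its direct residue formula; the verification that the poles are simple and distinct is the one place where one must invoke the structure of the Zolotarev nodes, but it presents no real obstacle, so I expect the argument to be short.
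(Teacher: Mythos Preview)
Your argument is correct, but it proceeds along a somewhat different route from the paper's. The paper works in two stages: it first expands only the ratio $\prod_{j=1}^{r-1}(x^2+c_{2j})/\prod_{j=1}^{r-1}(x^2+c_{2j-1})$ into $1+\sum_{j=1}^{r-1} b_j/(x^2+c_{2j-1})$, so that the $b_j$ arise as genuine intermediate residues; it then multiplies by the remaining factor $1/(x^2+c_{2r-1})$ and splits each product $\frac{b_j}{(x^2+c_{2j-1})(x^2+c_{2r-1})}$ by the elementary identity $\frac{1}{c_{2r-1}-c_{2j-1}}\big(\frac{1}{x^2+c_{2j-1}}-\frac{1}{x^2+c_{2r-1}}\big)$. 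Collecting the $1/(x^2+c_{2r-1})$ contributions produces $a_r=1-\sum_{j=1}^{r-1} b_j/(c_{2r-1}-c_{2j-1})$ automatically, with no asymptotic argument needed. Your approach instead does the full partial fraction in one shot via residues and then recognizes the $b_j$ only as an algebraic abbreviation inside $a_j$; the asymmetric form of $a_r$ then has to be recovered by your sum-to-one normalization at $y\to\infty$. Both arguments are equally short; the paper's version better explains why the proposition is stated with the auxiliary $b_j$'s at all, while yours is slightly more direct and makes the distinctness of the $c_{2j-1}$ explicit rather than tacit.
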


If complex coefficients are allowed, the following corollary can be
derived from Proposition~\ref{pro:p1} directly.

\begin{corollary}\label{cor:p1}
    The function $Z_{2r}(x;\ell)$ as in \eqref{eq:ZoloFunc} can be
    reformulated as
    \begin{equation}\label{eqn:paRe}
	Z_{2r}(x; \ell) = \frac{M}{2} \sum_{j = 1}^r \left(
	\frac{a_j}{ x + \imath \sqrt{c_{2j-1}} } + \frac{a_j}{ x -
	\imath \sqrt{c_{2j-1}} } \right),
    \end{equation}
    where $a_j$ and $c_{2j-1}$ are as defined in
    Proposition~\ref{pro:p1}.
\end{corollary}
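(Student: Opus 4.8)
The plan is to take Proposition~\ref{pro:p1} entirely as given and convert its real, quadratic partial-fraction form into the conjugate-linear form asserted in the corollary. From \eqref{eqn:pa} we already have
\[
    Z_{2r}(x;\ell) = Mx\sum_{j=1}^r \frac{a_j}{x^2+c_{2j-1}},
\]
so the only work remaining is to split each quadratic term $x/(x^2+c_{2j-1})$ into a pair of simple fractions with purely imaginary poles.

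First I would record that every $c_{2j-1}$ is strictly positive. Indeed, by \eqref{eqn:coef} each $c_j$ equals $\ell^2$ times the ratio $\sn^2/\cn^2$ of squared Jacobi elliptic functions, hence $c_j>0$. Consequently $\sqrt{c_{2j-1}}$ is real and positive, and the denominator factors over $\bbC$ as
\[
    x^2+c_{2j-1} = \left(x+\imath\sqrt{c_{2j-1}}\right)\left(x-\imath\sqrt{c_{2j-1}}\right),
\]
a product of two complex-conjugate linear factors.

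Next I would compute the residues of $x/(x^2+c_{2j-1})$ at the two simple poles $x=\pm\imath\sqrt{c_{2j-1}}$. Writing $s_j:=\sqrt{c_{2j-1}}$ and solving $x = A(x-\imath s_j)+B(x+\imath s_j)$ by evaluating at $x=\pm\imath s_j$ gives $A=B=\tfrac12$, so
\[
    \frac{x}{x^2+c_{2j-1}} = \frac12\left(\frac{1}{x+\imath\sqrt{c_{2j-1}}}+\frac{1}{x-\imath\sqrt{c_{2j-1}}}\right).
\]
Substituting this identity into the sum in \eqref{eqn:pa}, multiplying each term by $Ma_j$, and pulling the common factor $M/2$ out front reproduces \eqref{eqn:paRe} exactly.

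There is essentially no obstacle here: the whole argument rests on the positivity of the $c_{2j-1}$ (which guarantees purely imaginary, genuinely complex-conjugate poles) together with the elementary fact that $x/(x^2+s^2)$ has residue $1/2$ at each of its poles. The only point deserving a sentence of care is the positivity claim, since without $c_{2j-1}>0$ the factorization would introduce real poles, possibly on the approximation interval, and the stated conjugate-pair form would break down.
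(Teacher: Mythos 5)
Your proof is correct and takes essentially the same route as the paper, which derives the corollary ``directly'' from Proposition~\ref{pro:p1} via the identical splitting $\frac{x}{x^2+c_{2j-1}} = \frac{1}{2}\left(\frac{1}{x+\imath\sqrt{c_{2j-1}}}+\frac{1}{x-\imath\sqrt{c_{2j-1}}}\right)$, written out explicitly as the first display in the paper's proof of Proposition~\ref{pro:p2}. Your extra sentence verifying $c_{2j-1}>0$ from \eqref{eqn:coef} (so that the poles are genuinely purely imaginary conjugates) is a detail the paper leaves implicit, and is a worthwhile addition.
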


By Proposition~\ref{pro:p1}, we obtain the partial fraction
representation of $Z_{2r}(T(x);\ell)$ as follows, where $T(x)$ is a
M\"obius transformation $T(x)=\gamma\frac{x-\alpha}{x-\beta}$. The reader is referred to 
Appendix for the proof.

\begin{proposition}\label{pro:p2}
The function $Z_{2r}(T(x);\ell)$ can be reformulated as
\begin{equation}
    Z_{2r}( T(x); \ell) = M \sum_{j = 1}^r \frac{a_j \gamma}{\gamma^2
    + c_{2j-1}} + M \sum_{j = 1}^r \left( \frac{w_j}{x - \sigma_j} +
    \frac{\bar{w}_j}{x - \bar{\sigma}_j}\right).
\end{equation}
where
\begin{equation}
    \sigma_j = \frac{\gamma \alpha + \imath \sqrt{c_{2j-1}} \beta
    }{\gamma + \imath \sqrt{c_{2j-1}} }, \quad w_j = \frac{a_j (\sigma_j
    - \beta)}{2(\gamma + \imath
	\sqrt{c_{2j-1}})}.
\end{equation}
\end{proposition}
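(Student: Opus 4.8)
The plan is to start from the complex partial fraction expansion of $Z_{2r}(x;\ell)$ supplied by Corollary~\ref{cor:p1},
\[
    Z_{2r}(x;\ell) = \frac{M}{2} \sum_{j=1}^r \left( \frac{a_j}{x + \imath\sqrt{c_{2j-1}}} + \frac{a_j}{x - \imath\sqrt{c_{2j-1}}} \right),
\]
and to substitute $x \mapsto T(x) = \gamma\frac{x-\alpha}{x-\beta}$ directly into each summand. Abbreviating $d_j := \sqrt{c_{2j-1}}$, the first step is to clear the inner fraction: the term $\frac{a_j}{T(x) + \imath d_j}$ becomes $\frac{a_j(x-\beta)}{(\gamma + \imath d_j)\,x - (\gamma\alpha + \imath d_j\beta)}$, which is a rational function of type $(1,1)$ in $x$. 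Factoring $\gamma + \imath d_j$ out of the denominator immediately exposes the pole at $\sigma_j = \frac{\gamma\alpha + \imath d_j\beta}{\gamma + \imath d_j}$, exactly as claimed, while the companion term carrying $-\imath d_j$ produces a pole at $\bar\sigma_j$.

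The second step is the elementary identity $\frac{x-\beta}{x-\sigma_j} = 1 + \frac{\sigma_j-\beta}{x-\sigma_j}$, which splits each summand into an $x$-independent constant plus a simple pole. Reading off the pole part and reinstating the prefactor $\frac{M}{2}$ from Corollary~\ref{cor:p1} gives residue $M w_j$ at $\sigma_j$ with $w_j = \frac{a_j(\sigma_j-\beta)}{2(\gamma+\imath d_j)}$, matching the statement verbatim. The third step is to collect the constant remainders: summing $\frac{a_j}{\gamma+\imath d_j} + \frac{a_j}{\gamma - \imath d_j}$ across the conjugate pair and rationalizing yields $\frac{2a_j\gamma}{\gamma^2 + c_{2j-1}}$, so that the global prefactor reproduces the leading term $M\sum_j \frac{a_j\gamma}{\gamma^2+c_{2j-1}}$.

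The final step is to invoke the reality of all the defining constants — the coefficients $a_j$ and $c_{2j-1}$ are real by Proposition~\ref{pro:p1}, and $\alpha,\beta,\gamma$ are real because they solve the real interpolation conditions \eqref{eqn:req} — to conclude that the pole and residue attached to the $-\imath d_j$ branch are precisely the complex conjugates $\bar\sigma_j$ and $\bar w_j$ of those from the $+\imath d_j$ branch, which is what permits writing the $2r$ poles as $r$ conjugate pairs. Since every step is a direct algebraic substitution, I do not anticipate a genuine obstacle; the only place demanding care is bookkeeping the constant remainders generated by the type-$(1,1)$ reductions and checking that the factor $\tfrac12$ from Corollary~\ref{cor:p1} lands consistently in both the residues $w_j$ and the leading constant, so that the stated prefactors $M$ (rather than $M/2$) emerge correctly.
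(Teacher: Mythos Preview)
Your proposal is correct and follows essentially the same route as the paper: both start from the complex partial fraction expansion of Corollary~\ref{cor:p1}, substitute $T(x)$, factor the denominator to expose the pole $\sigma_j$, split each term via $\frac{x-\beta}{x-\sigma_j}=1+\frac{\sigma_j-\beta}{x-\sigma_j}$, and then appeal to the reality of $a_j,c_{2j-1},\alpha,\beta,\gamma$ to identify the conjugate pair. Your bookkeeping of the factors of $M/2$ and the constant term $\frac{2a_j\gamma}{\gamma^2+c_{2j-1}}$ matches the paper's computation exactly.
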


\begin{remark}
    In the rest of this paper, we denote the constants associated with
    $Z_{2r}(x; \ell_2)$ as $a_j, c_{2j-1}, \sigma_j,$ and $w_j$ for $j =
    1, \dots, r$; and the constants associated with $\hZ_{2r}(x; \ell_1)$
    as $\ha_j, \hc_{2j-1}, \hsigma_j,$ and $\hw_j$ for $j = 1, \dots, r$.
\end{remark}

\begin{figure}[htb]
    \centering
    \begin{subfigure}[t]{0.48\textwidth}
        \centering
        \includegraphics[width=\textwidth]{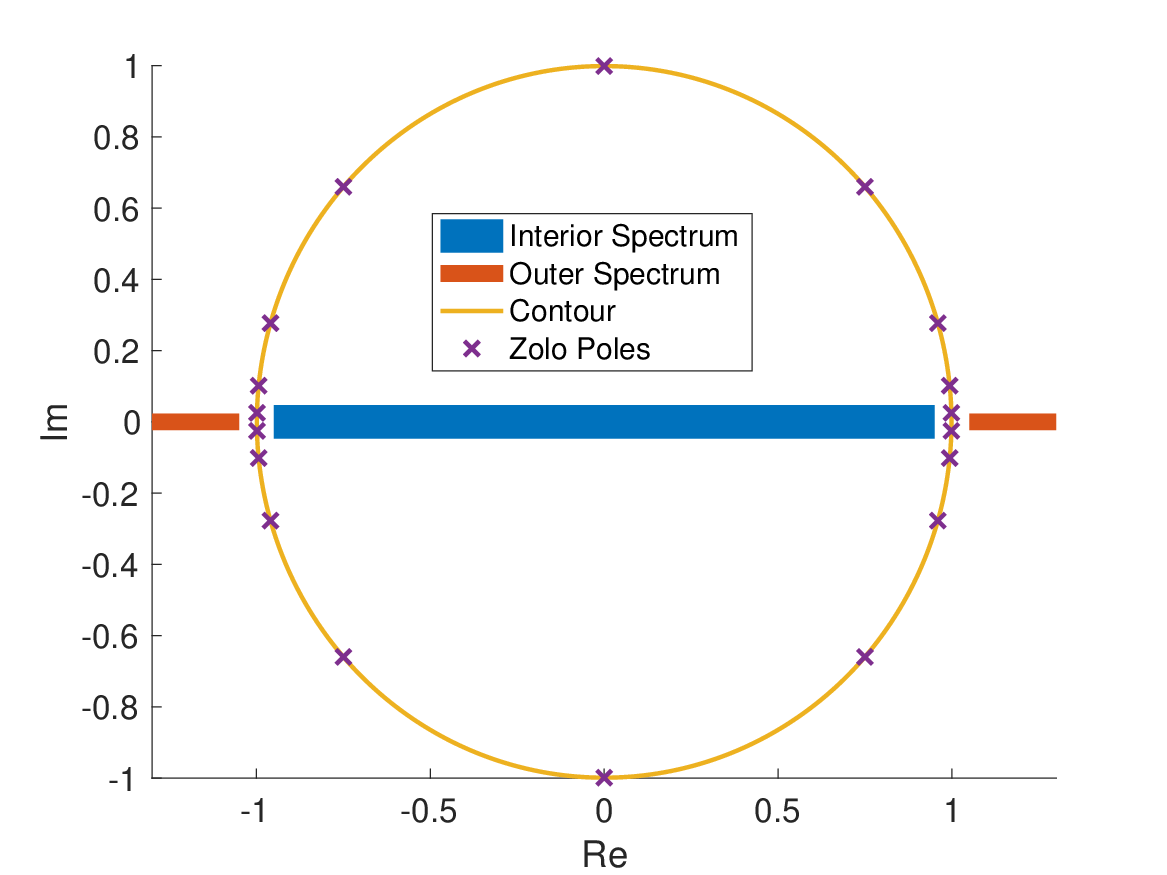}
        \caption{}
        \label{fig:ZoloPole9}
    \end{subfigure}
    \begin{subfigure}[t]{0.48\textwidth}
        \centering
        \includegraphics[width=\textwidth]{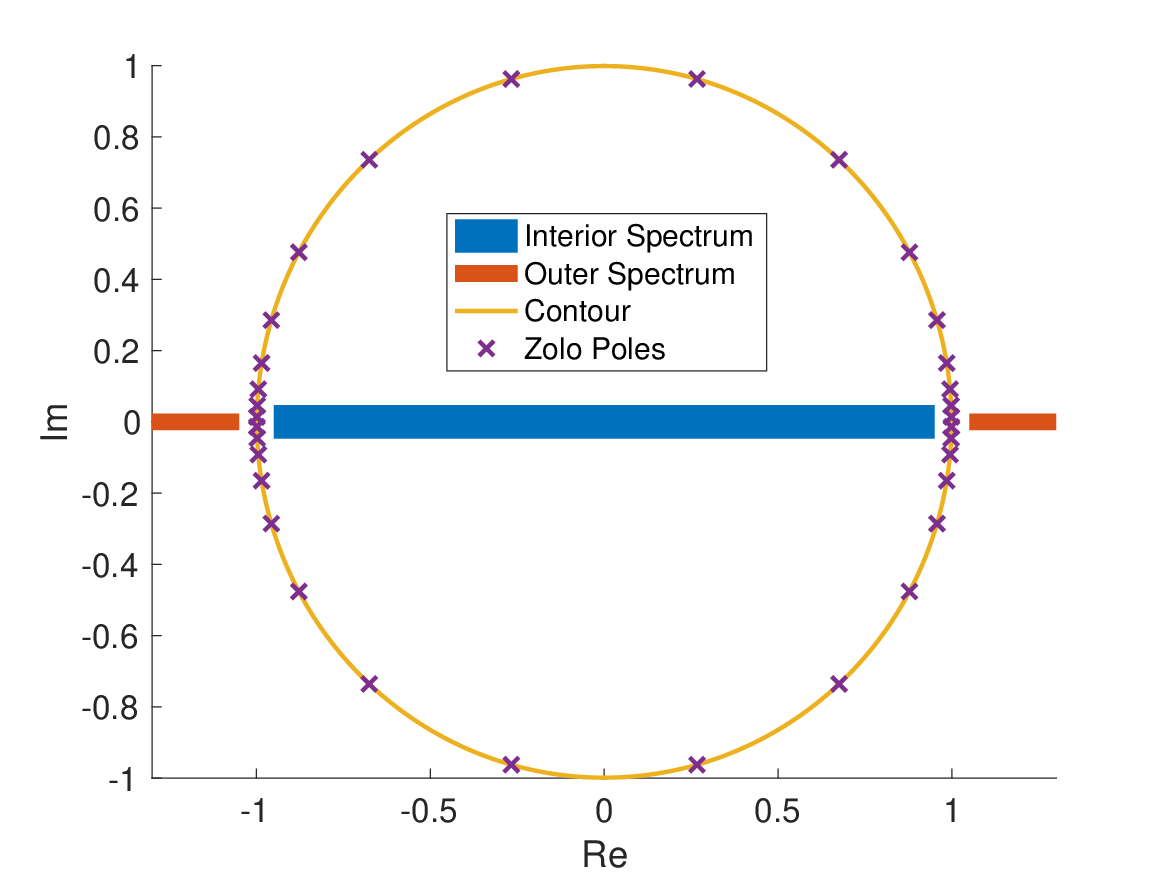}
        \caption{}
        \label{fig:ZoloPole16}
    \end{subfigure}

    \caption{The corresponding contour integral discretization of
    Zolotarev's function composed with M\"obius transformation. The
    eigengaps here are $(-1.1,-0.9)$ and $(0.9,1.1)$ and the contour is a
    circle centered at origin with radius $0.998749$. The discretization
    points are calculated as Proposition~\ref{pro:p2}. (a) $r = 9$
    provides 18 poles; (b) $r = 16$ provides 32 poles.}

    \label{fig:ZoloPole}

\end{figure} 

Proposition~\ref{pro:p2} can be viewed as a discretization of a contour
at poles, $\sigma_j$ and $\hsigma_j$ with weights $w_j$ and $\hw_j$ for
$j = 1, \dots, r$. The contour is a circle centered on the real axis
cutting through the eigengaps. Figure~\ref{fig:ZoloPole} demonstrate
an example with eigengaps $(-1.1,-0.9)$ and $(0.9,1.1)$. The calculated
contour is a circle centered at origin with radius $0.998749$. Meanwhile,
the pole locations and the corresponding weights are provided by
Proposition~\ref{pro:p2}.  Figure~\ref{fig:ZoloPole9} adopts $r = 9$
which is also the composition of two Zolotarev's functions with degree
$3$ whereas Figure~\ref{fig:ZoloPole16} adopts $r = 16$ which is also
the composition of two Zolotarev's functions with degree $4$.

With these propositions ready, we now introduce the hybrid algorithm for
computing the matvec $R_{ab}(B^{-1}A)V = Z_{2r}(\hZ_{2r}(T(B^{-1}A);
\ell_1); \ell_2)V$. This hybrid algorithm consists of two parts of
linear system solvers: an inner part and an outer part. The inner part
implicitly computes the matvec $\hZ_{2r}(T(B^{-1}A); \ell_1)V := GV$ via
fast direct solvers. Once $GV$ has been implicitly computed, the matrix
$G$ can be viewed as an operator with fast application algorithm, where
each application costs nearly $O(N)$ operations in many applications.
The outer part computes $Z_{2r}(G;\ell_2)V$ using a GMRES method when
the fast matvec $GV$ is available. Since the matrix $G$ has singular
values greater than $\ell_2 = \hZ_{2r}(\ell_1; \ell_1)$, which is
a number close to 1, a few steps of iterations in GMRES method are
enough to solve the linear systems in the matvec $Z_{2r}(G;\ell_2)V$
accurately. In practice, the iteration number varies from 6 to 25.

In particular, by Proposition \ref{pro:p2},
\begin{equation}
    \begin{split}
        GV = & \hZ_{2r}(T(B^{-1}A); \ell_1) V \\
           = & \hM \sum_{j=1}^r \frac{\ha_j\gamma }{\gamma^2+\hc_{2j-1}} V 
            + \hM \sum_{j=1}^r \left( \left( \hw_j \left( A - \hsigma_j
            B \right)^{-1} BV \right) + \left( \overline{\hw}_j \left(
            A - \overline{\hsigma}_j B \right)^{-1} BV \right) \right) \\
           = & \hM \sum_{j=1}^r \frac{\ha_j\gamma }{\gamma^2+\hc_{2j-1}} V 
            + \hM \sum_{j=1}^r \left( \left( \hw_j \left( A - \hsigma_j
            B \right)^{-1} BV \right) + \left( \overline{\hw}_j \left(
            A - \hsigma_j B \right)^{-*} BV \right) \right).
        \label{eqn:solve}
    \end{split}
\end{equation}
The third equality holds since $A$ and $B$ are Hermitian matrices.
Hence, evaluating $\hZ_{2r}(T(B^{-1}A);\ell_1)V$ boils down to solving
$r$ linear systems of the form
\begin{equation}\label{eqn:mts}
(A-\hsigma_j B)x=y
\end{equation}
for $j=1,\dots,r$. This is a set of $r$ sparse linear systems. Since
the operator $G$ is involved in an outer function, where it is
repeatedly applied, a fast and efficient algorithm for applying $G$
is necessary. This can also be rephrased as ``a fast and efficient
algorithm for solving \eqref{eqn:mts} is necessary''. There are two
groups of efficient algorithms for solving \eqref{eqn:mts}: direct
solvers and iterative solvers with efficient preconditioners.

Fast direct solvers for sparse linear system as $A-\hsigma_j B$ usually
contains two phases. The first phase (termed as the pre-factorization
phase) factorizes the sparse matrix into a product of a sequence
of lower and upper triangular sparse matrices. The second phase
(termed as the solving phase) solves the sequence of triangular sparse
matrices efficiently against vectors. The computational complexities
for both the pre-factorization and the solving phase vary from
method to method, also heavily rely on the sparsity pattern of the
matrix. For simplicity, we denoted the computational complexity for the
pre-factorization and the solving phase as $F_N$ and $S_N$ respectively
for matrices of size $N \times N$. Usually, $F_N$ is of higher order
in $N$ than $S_N$. Particularly, we adopt the multifrontal method
(MF)~\cite{Duff1983,Liu1992} as the general direct sparse solver for all
numerical examples in this paper. For sparse matrices of size $N\times
N$ from two-dimensional PDEs, the computational complexities for MF are
$F_N = O(N^{3/2})$ and $S_N = O(N\log N)$. While, for three-dimensional
problems, MF requires $F_N = O(N^2)$ and $S_N=O(N^{4/3})$ operations.

Iterative solvers with efficient preconditioners is another efficient
way to solve sparse linear systems. The construction of preconditioners
is the pre-computation phase whereas the iteration together with applying
the preconditioner is the solving phase. Similarly to the direct solver,
the choices of iterative solvers and preconditioners highly depend on
sparse matrices. For elliptic PDEs, GMRES could be used as the iterative
solver for $A-\hsigma_j B$, and MF with reduced frontals~\cite{Xia2013,
Schmitz2014, Ho2015, Li2016_DHIF} could provide good preconditioners.

Once the fast application of $G$ is available, we apply the
classical GMRES together
with the shift-invariant property of the Krylov subspace (See
\cite{Saad2003} Section 7.3)  to evaluate $R_{ab}(B^{-1}A)V =
Z_{2r}(\hZ_{2r}(T(B^{-1}A);\ell_1);\ell_2)V = Z_{2r}(G;\ell_2)V$.
In more particular, by Corollary~\ref{cor:p1}, we have
\begin{equation}
    Z_{2r}(G;\ell_2)V = M \sum_{j=1}^r \frac{a_j}{2} \left(
    \left( G + \imath
    \sqrt{c_{2j-1}}I \right)^{-1} V + 
    \left( G - \imath
    \sqrt{c_{2j-1}}I \right)^{-1} V
    \right),
    \label{eqn:solve2}
\end{equation}
where $I$ is the identify matrix.  Hence, to evaluate
$Z_{2r}(G;\ell_2)V$, we need to solve multi-shift linear systems of
the form
\begin{equation}\label{eqn:bcg}
    (G \pm \imath \sqrt{c_{2j-1}}I)x=y
\end{equation}
with $2r$ shifts $\pm \imath \sqrt{c_{2j-1}}$ for $j = 1, \dots,
r$. These systems are solved by the multi-shift GMRES method
efficiently. In each iteration, only a single evaluation of $GV$
is needed for all shifts. Meanwhile, since $G$ has a condition
number close to $1$, only a few iterations are sufficient to solve the
multi-shift systems to a high accuracy. Let the number of columns in $V$
be $O(\nlam)$ and the number of iterations be $m$.  The complexity for
evaluating the rational filter $R_{ab}(B^{-1}A)V$ is $O(m \nlam S_N)$.

\begin{algorithm2e}[H]

\DontPrintSemicolon
\SetAlgoNoLine
\SetKwInOut{Input}{input}
\SetKwInOut{Output}{output}

    \Input{A sparse Hermitian definite matrix pencil $(A, B)$, a spectrum
    range $(a,b)$, vectors $V$, tolerance $\eps$ }

    \Output{$R_{ab}(B^{-1}A)V$ as defined in \eqref{eqn:R3}}

    Estimate eigengaps $(a_-,a_+)$ and $(b_-,b_+)$ for $a$ and $b$
    respectively.

    Solve \eqref{eqn:req} for $\ell_1$ and M\"obius transformation parameter
    $\gamma, \alpha, \beta$.

    Given $\eps$, find the smallest order of Zolotarev's functions, $r$,
    such that our rational function approximates $S_{ab}$ within the target
    accuracy $\eps$ by gradually increasing $r$.

    Calculate function coefficients, $\hM, \ha_j, \hw_j, \hsigma_j,
    \hc_{2j-1}$ and $\ell_2, M, a_j, c_{2j-1}$ for $j = 1, \dots, r$.

    \For{ $j = 1, 2, \dots, r$}{ Pre-factorize $A - \hsigma_j B$ as
    $K_j$ }

    Generate algorithm for operator $$GV = \hM \sum_{j=1}^r
    \frac{\ha_j\gamma }{\gamma^2+\hc_{2j-1}} V + \hM \sum_{j=1}^r \left(
    \hw_j K_j^{-1} BV + \overline{\hw}_j K_j^{-*} BV\right).$$

    Apply the multi-shift GMRES method for solving linear systems $(G
    \pm \imath \sqrt{c_{2j-1}}I)^{-1}V$ with $j = 1, \dots, r$.

    $R_{ab}(B^{-1}A)V = \frac{M}{2} \sum_{j=1}^r \frac{a_j}{2} \left(
    \left( G + \imath \sqrt{c_{2j-1}}I \right)^{-1} V + \left( G -
    \imath \sqrt{c_{2j-1}}I \right)^{-1} V  \right) + \frac{1}{2}V$

    \caption{A hybrid algorithm for the rational filter
    $R_{ab}(B^{-1}A)$} \label{alg:hybrid}

\end{algorithm2e}

Algorithm~\ref{alg:hybrid} summarizes the hybrid algorithm
introduced above for applying the rational filter $R_{ab}(B^{-1}A)$
in \eqref{eqn:R3} to given vectors $V$. By taking Line 1-8 in
Algorithm~\ref{alg:hybrid} as precomputation and inserting Line 9-10 in
Algorithm~\ref{alg:hybrid} into Line 3 in Algorithm~\ref{alg:subiter},
we obtain a complete algorithm for solving the interior generalized
eigenvalue problem on a given interval $(a,b)$. When the matrix pencil
$(A,B)$ consists of sparse complex Hermitian definite matrices, the
dominant cost of the algorithm is the pre-factorization of $r$ matrices
in \eqref{eqn:mts} or Line 6 in Algorithm~\ref{alg:hybrid}.

\begin{remark}
    Given a desired accuracy $\epsilon$ and the parameter $\ell_1$ computed
    from the estimated eigengaps, we can estimate the order of Zolotarev's
    functions efficiently, which corresponds to the third line in
    Algorithm~\ref{alg:hybrid}. Notice that the $L^\infty$ error of
    $S(x;\ell_1)$ as in \eqref{eqn:S} approximating the signum function is
    achieved at $x = \ell_1$. Therefore, in practice, we evaluate $S(\ell_1;
    \ell_1)$ for a sequence of $r$s and choose the smallest $r$ such that
    the error is bounded by $\epsilon$. Since the evaluation of $S(\ell_1;
    \ell_1)$ does not involve any matrix, the estimation of the order $r$
    can be done efficiently. If different $r_1$ and $r_2$ are of interest, a
    small table of $S(\ell_1; \ell_1)$ can be computed as a reference for
    uses to select a pair of $(r_1,r_2)$ from it.
\end{remark}

\section{Numerical examples}\label{sec:results}

In this section, we will illustrate three examples based on different
collections of sparse matrices. The first example aims to show the scaling
of the proposed method; the second example shows the comparison
with the state of the art algorithm for spectrum slicing problem,
FEAST~\cite{Polizzi2009, Guttel2015}; the last example shows the efficiency of the proposed method for various kinds of sparse matrices. All numerical examples are performed
on a desktop with Intel Core i7-3770K 3.5 GHz, 32 GB of memory. The
proposed algorithm in this paper is implemented in MATLAB R2017b, which is
shorten as ``ZoloEig'' or ``Zolo'' in this section. And the FEAST v3.0
compiled with Intel compiler produces the results in the part of
``FEAST''. To make the numerical results reproducible, the codes for the numerical 
examples can be found in the authors' personal homepages.

Throughout the numerical section, a relative error without knowing the underlying ground true eigenpairs is used to measure the accuracy of both ZoloEig and FEAST. The relative error of the estimated interior eigenpairs
in the interval $(a,b)$ is defined as
\begin{equation}\label{eq:relerr}
    e_{\Lambda,X} = \max_{1\leq i \leq k} \frac{ \norm{ A X_i - B X_i \lambda_i}_2 }{
        \norm{ \max (\abs{a},\abs{b}) B X_i }_2},
\end{equation}
where $(A,B)$ is the matrix pencil of size $N$ by $N$; $\Lambda \in
\bbR^{k \times k}$ is a diagonal matrix with diagonal entries being the
estimated eigenvalues in the given interval, $\{\lambda_1, \lambda_2, \dots,
\lambda_k\}$; and $X_i \in \bbF^{N \times 1}$ denotes the $i$-th
eigenvector for $1\leq i \leq k$. This relative error of the eigenvalue
decomposition is also used in ZoloEig as the stopping criteria.  Besides
the error measurement, we also define a measurement of the difficulty of
the problem as the relative eigengap,
\begin{equation}
    \delta_\lambda = \frac{\min(a_+ - a_-, b_+ - b_-)}{b_- - a_+}.
\end{equation}
Such a relative eigengap, $\delta_\lambda$ can measure the intrinsic
difficulty of the spectrum slicing problem for all existing algorithms based on polynomial filters and rational function filters.

\begin{table}[htp]
    \centering
    \begin{tabular}{cl}
        \toprule
        Notation & Description \\
        \toprule
        $n_{ss}$ & Size of subspace used in ZoloEig or FEAST. \\
        $n_{iter}$ & Number of subspace iterations used by ZoloEig or FEAST. \\
        $n_{gmres}$ & Number of GMRES iterations used by ZoloEig. \\
        $n_{solv}$ & Total number of linear system solves used by ZoloEig
        or FEAST. \\
        $T_{fact}$ & Total factorization time used by ZoloEig in second. \\
        $T_{iter}$ & Total iteration time used by ZoloEig in second. \\
        $T_{total}$ & Total runtime used by ZoloEig in second. \\
        \bottomrule
    \end{tabular}
    \caption{Notations used in numerical results.}
    \label{tab:numnotations}
\end{table}

Other notations are listed in Table~\ref{tab:numnotations}. The total number
of linear system solves in ZoloEig can be calculated as,
\begin{equation}
    n_{solv} = r \cdot n_{ss} \cdot n_{iter} \cdot n_{gmres},
\end{equation}
whereas the one in FEAST is
\begin{equation}
    n_{solv} = r \cdot n_{ss} \cdot n_{iter}.
\end{equation}

\subsection{Spectrum of Hamiltonian Operators}

The first example is a three-dimensional Hamiltonian operator,
\begin{equation}\label{eq:ex-Ham}
    H = -\frac{1}{2} \Delta + V,
\end{equation}
on $[0,1)^3$ with a Dirichlet boundary condition. Here $V$ is a
three-dimensional potential field containing three Gaussian wells with
random depths uniformly chosen from $(0,1]$ and fixed radius 0.2.
Figure~\ref{fig:gaussianwell} shows the isosurface of an instance of the
3D random Gaussian well. This example serves the role of illustrating the
efficiency and complexity of the proposed new algorithm. We first
discretize the domain $[0,1)^3$ by a uniform grid with $n$ points on each
dimension and the operator is discretized with 7-point stencil finite
difference method that results in a sparse matrix. The multifrontal method
is naturally designed for inverting such sparse matrices. In this section,
we adopt Matlab ``eigs'' function to evaluate the smallest 88 eigenpairs
as the reference. Due to the randomness in the potential, the relative
eigengap of the smallest 88 eigenvalues varies a lot. In order to obtain
the scaling of the algorithm, we prefer to have problems of different
sizes but with similar difficulty. Therefore, we generate random potential
fields until the problem has a relative eigengap between $10^{-3}$ and
$10^{-4}$.  In such cases, the claimed complexity of the ZoloEig algorithm
can be rigorously verified for the discretized operator of
\eqref{eq:ex-Ham}.  In this example, the tolerance is set to be $10^{-8}$,
$r$ is set as $(4,4)$ for all matrices, and subspaces with dimension 89
are used to recover the 88 eigenpairs.

\begin{figure}[htp]
\begin{center}
    \includegraphics[height=3in]{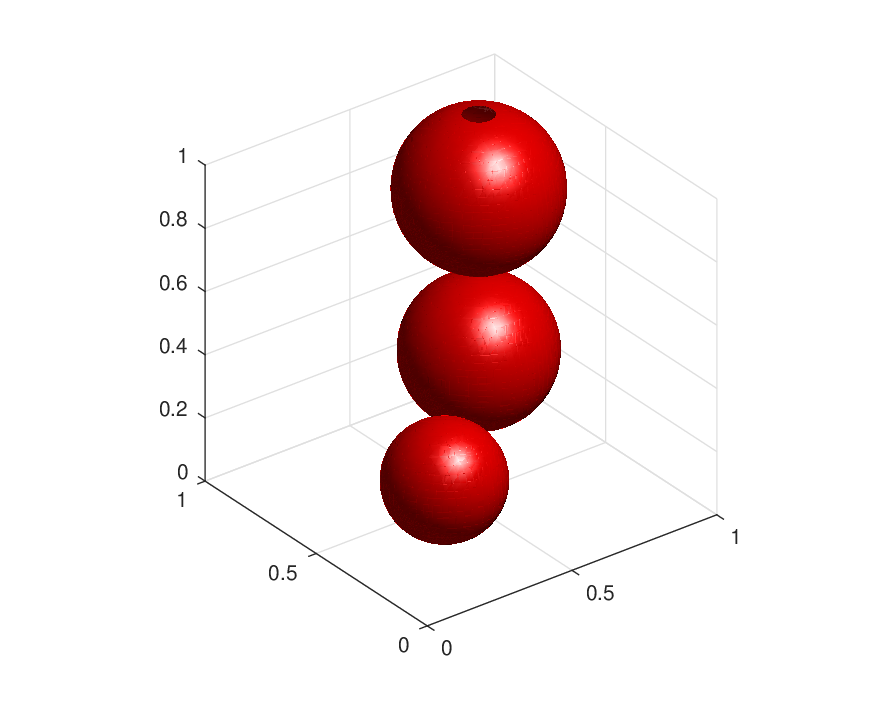}
\end{center}
\caption{An instance of 3D random potential field using Gaussian wells. The
    isosurface is at level $-0.5$.} \label{fig:gaussianwell}
\end{figure} 

\begin{figure}[htp]
    \centering
    \begin{subfigure}[t]{0.48\textwidth}
        \centering
        \includegraphics[width=\textwidth]{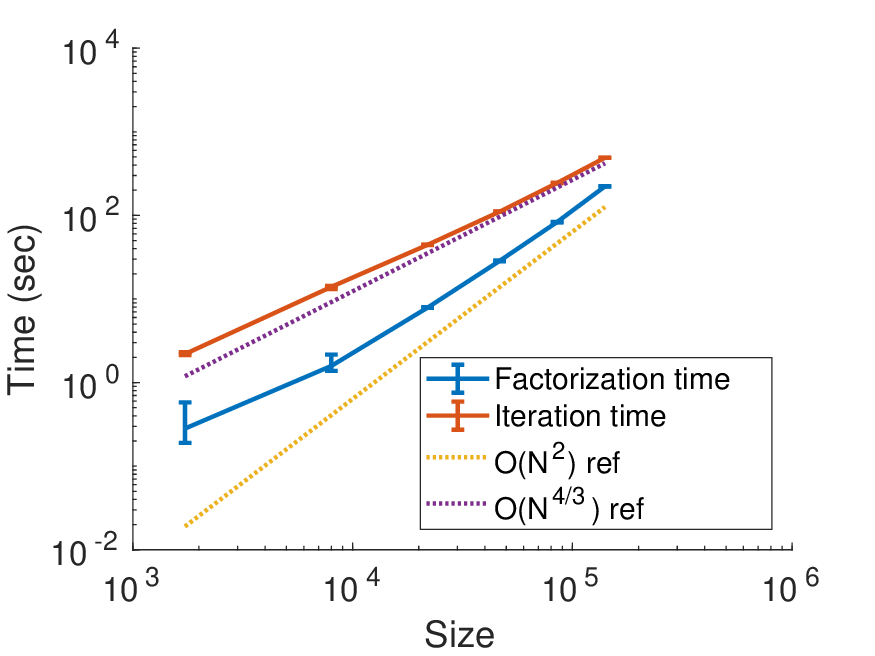}
        \caption{}
        \label{fig:ex1-time}
    \end{subfigure}
    \begin{subfigure}[t]{0.48\textwidth}
        \centering
        \includegraphics[width=\textwidth]{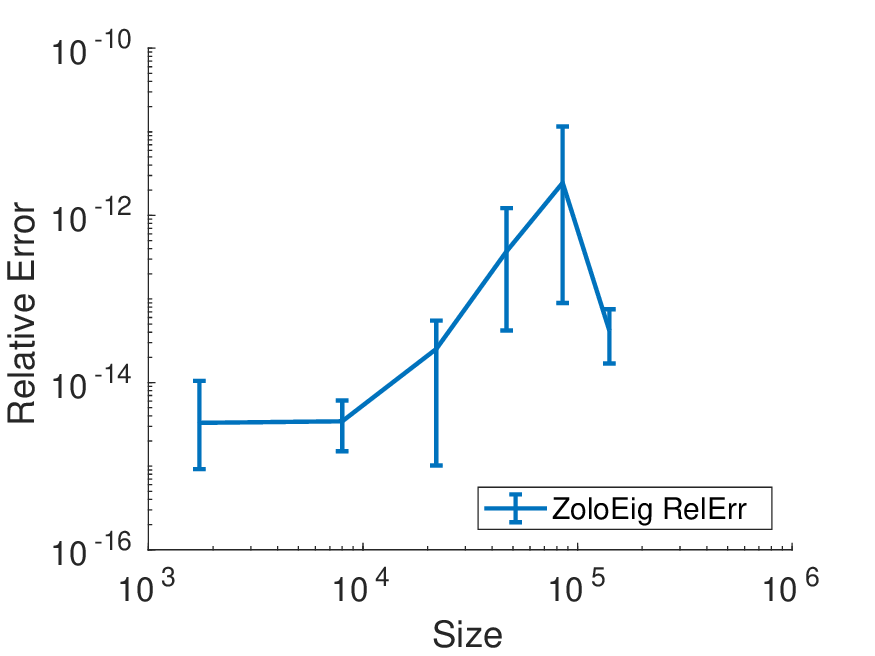}
        \caption{}
        \label{fig:ex1-err}
    \end{subfigure}

    \caption{The running time and the relative error for 3D Hamiltonian
    operator with Gaussian wells solved via ZoloEig. The relative error
    here is the relative error of eigenpairs defined in \eqref{eq:relerr}.}
    \label{fig:ex1}
\end{figure} 

\begin{table}[htp]
    \centering
    \scriptsize
    \begin{tabular}{ccccccccccc}
        \toprule
        $N$ & $\delta_\lambda$ & $r$ & $e_{\Lambda,X}$
        & $n_{ss}$ & $n_{iter}$ & $n_{gmres}$ & $n_{solv}$
        & $T_{fact}$ & $T_{iter}$ & $T_{total}$ \\
        \toprule
          1728 & 8.6e-04 & (4,4) & 3.3e-15 &  89 &   1 &  14 & 4984
        & 2.8e-01 & 2.2e+00 & 2.6e+00\\
          8000 & 5.1e-04 & (4,4) & 3.4e-15 &  89 &   1 &  16 & 5625
        & 1.6e+00 & 1.4e+01 & 1.6e+01\\
         21952 & 4.3e-04 & (4,4) & 2.5e-14 &  89 &   1 &  15 & 5340
        & 7.9e+00 & 4.5e+01 & 5.2e+01\\
         46656 & 6.6e-04 & (4,4) & 3.7e-13 &  89 &   1 &  15 & 5340
        & 2.9e+01 & 1.1e+02 & 1.4e+02\\
         85184 & 2.4e-04 & (4,4) & 2.5e-12 &  89 &   1 &  16 & 5696
        & 8.3e+01 & 2.4e+02 & 3.3e+02\\
        140608 & 1.6e-04 & (4,4) & 4.2e-14 &  89 &   1 &  17 & 6052
        & 2.2e+02 & 4.9e+02 & 7.1e+02\\
        \bottomrule
    \end{tabular}

    \caption{Numerical results for 3D Hamiltonian Operators. $N$ is the
    size of the sparse matrix, $r$ is the order used in ZoloEig, other
    notations are as defined in Table~\ref{tab:numnotations}.}
    
    \label{tab:ex1}
\end{table}

Figure~\ref{fig:ex1} shows the running time and the relative error of
eigenvalues, $e_{\Lambda,X}$. The 3D problem size varies from $12^3$ to
$52^3$ and the corresponding matrix size varies from $1,728$ to $140,608$.
The order $r$ is $(4,4)$ in the ZoloEig.  For each matrix, we provide the
true eigenvalues $\lambda_1, \lambda_{88}, \lambda_{89}$ as the input,
$a_-=-\infty, a_+=\lambda_1, b_-=\lambda_{88}, b_+=\lambda_{89}$, where
$\lambda_1$ is the smallest eigenvalue, $\lambda_{88}$ and $\lambda_{89}$
are the 88th and 89th small eigenvalues. The ZoloEig is executed 5 times
with different initial random vectors for each matrix. In
Figure~\ref{fig:ex1-time} and Figure~\ref{fig:ex1-err}, these results are
presented in a bar plot manner: the vertical bars indicate the largest and
the smallest values, whereas the trend line goes through the mean values.
And Table~\ref{tab:ex1} shows the means of the results across 5 runs.  As
we can read from Figure~\ref{fig:ex1-time}, the iteration time for ZoloEig
scales as $N^{4/3}$ while the factorization time scales as $N^2$. Both of
them agree with the scaling of multifrontal method.  Although for the
examples here, the iteration time is more expensive than the factorization
time, as $N$ getting larger, the total runtime will quickly be dominated
by the factorization. Therefore, reducing the number of factorizations
would significantly reduce the cost of the algorithm.
Figure~\ref{fig:ex1-err} shows the relative error of the eigenvalues,
which in general increases mildly as the problem size increases. All the
relative errors are achieved with only one subspace iteration. At the same
time, we find that the errors are far smaller than the tolerance
$10^{-8}$. This implies that setting $r=(4,4)$ overkills the problem and, in
practice, user could use smaller $r$.

\subsection{Hamiltonian of Silicon Bulk}

The second example is a sparse Hermitian definite matrix pencil, $(A,B)$,
generated by SIESTA (a quantum chemistry software). For a silicon bulk in
3D with $y^3$ supercell of cubic Si, a DZP basis set with radius 4
\si{\angstrom} is adopted to discretize the system, where $y=2, 3, 4, 5$.
In the spectrum slicing problem, the interval is chosen to contain the
smallest 93 eigenvalues. The ZoloEig algorithm with $r = (3,3)$, $n_{ss} =
94$ is used to solve the eigenvalue problem. The tolerance for both
ZoloEig and FEAST is set to be $10^{-14}$. Here we discuss the choice of
the parameters used in FEAST, as in Table~\ref{tab:ex2-comp}, in detail.
Since we want to keep the number of factorizations as small as possible,
we test FEAST with fixed $n_{ss}=200$ and gradually increasing $r$
starting from 3 until the first $r$ that FEAST converges. Later, given the
$r$, we choose $n_{ss}$ that minimize $n_{solv}$. Therefore, we have tried 
our best to obtain the optimal parameters for FEAST to maintain the 
smallest possible number of factorizations.

\begin{table}[htp]
    \centering
    \tiny
    \begin{tabular}{cccccccccccc}
        \toprule
        $y$ & $N$ & $\delta_\lambda$ & $r$ & $e_{\Lambda,X}$
        & $n_{ss}$ & $n_{iter}$ & $n_{gmres}$ & $n_{solv}$ & $T_{fact}$
        & $T_{iter}$ & $T_{total}$ \\
        \toprule
        2 &   832 & 9.6e-02 & (3,3) & 4.7e-17 & 94 & 1 & 10 & 2820 & 1.1e+00
          & 2.4e+00 & 3.5e+00 \\
        3 &  2808 & 3.5e-01 & (3,3) & 9.9e-16 & 94 & 1 &  7 & 1974 & 9.4e+00 
          & 7.9e+00 & 1.7e+01 \\
        4 &  6656 & 3.3e-02 & (3,3) & 3.8e-15 & 94 & 1 & 11 & 3102 & 4.4e+01
          & 4.2e+01 & 8.6e+01 \\
        5 & 13000 & 9.0e-02 & (3,3) & 5.9e-15 & 94 & 1 &  9 & 2538 & 1.6e+02
          & 8.0e+01 & 2.4e+02 \\
        \bottomrule
    \end{tabular}

    \caption{Numerical results of ZoloEig for generalized eigenvalue problems from SIESTA. $y$
    is the number of unit cell on each dimension and other notations are as
    in Table~\ref{tab:ex1}.}
    
    \label{tab:ex2}
\end{table}

\begin{table}[htp]
    \centering
    \begin{tabular}{c|cccccc|ccccc}
        \toprule
	     & \multicolumn{6}{c|}{ZoloEig} & \multicolumn{5}{c}{FEAST} \\
	    $y$ & $r$ & $e_{\Lambda,X}$ & $n_{ss}$ & $n_{iter}$ & $n_{gmres}$ &
	    $n_{solv}$ & $r$ & $e_{\Lambda,X}$ & $n_{ss}$ & $n_{iter}$ &
	    $n_{solv}$ \\

        \toprule
        2 & (3,3) &4.7e-17 & 94 & 1 & 10 & 2820 & 3 & 8.3e-15 & 97 & 29 & 8439 \\
        3 & (3,3) &9.9e-16 & 94 & 1 &  7 & 1974 & 4 & 3.4e-15 & 94 & 19 & 7144 \\
        4 & (3,3) &3.8e-15 & 94 & 1 & 11 & 3102 & 6 & 8.2e-15 & 96 & 11 & 6336 \\
        5 & (3,3) &5.9e-15 & 94 & 1 &  9 & 2538 & 7 & 9.4e-15 & 112 & 9 & 7056 \\
        \bottomrule
    \end{tabular}

    \caption{Comparison between ZoloEig and FEAST in the example of SIESTA. Notations are as in
    Table~\ref{tab:ex1}}
    
    \label{tab:ex2-comp}
\end{table}

Table~\ref{tab:ex2} includes the detail information of the numerical
results of ZoloEig. According to column $T_{fact}$ and $T_{iter}$, we find
the same scaling as in the first example. However, the factorization time
is more expensive here due to the increase of the non-zeros in the
Hamiltonian. And the total time is dominated by the factorization when
$N=13000$. Therefore, it is worth to emphasize again that reducing the
number of factorizations is important.

Table~\ref{tab:ex2-comp} provides the comparison between ZoloEig and FEAST
in the sequential cases.  Note that these two algorithms were implemented in
different programming languages: ZoloEig is implemented in MATLAB and FEAST
is in Fortran. Direct comparison of the runtime is unfair for ZoloEig, since
MATLAB code is usually about 5x to 10x slower than Fortran
code~\footnote{Even though there is difference between programming
languages, we find that the actual runtime of ZoloEig is still faster than
that of FEAST for large problem sizes, namely when $y\geq 4$ in Table
\ref{tab:ex2-comp}.}. Hence, we compare the total number of linear system
solves here, which is the main cost of both algorithms besides the
factorizations. Comparing two columns of $n_{solv}$'s in Table
\ref{tab:ex2-comp}, we see that ZoloEig is about 2 to 3 times cheaper than
FEAST in terms of the number of applying the direct solver, $n_{solv}$. More
importantly, when the problem size is large, the factorization time of the
direct solver is dominanting the runtime. In this regime, ZoloEig might be
also more efficient than FEAST since it requires a smaller number of
factorizations. ZoloEig requires only $r=3$ factorizations in Table
\ref{tab:ex2-comp}, while FEAST requires $3$ to $7$ factorizations and the
number of factorizations slightly increases as the problem size grows.

In the case of parallel computing, spectrum slicing algorithms including
both FEAST and ZoloEig could be highly scalable. For example, eigenpairs in
different spectrum ranges can be estimated independently; multishift linear
systems can be solved independently; and each equation solver can be applied
in parallel. If there was unlimited computer resource, then the advantage of
ZoloEig over FEAST in terms of a smaller number of factorization might be
less significant, but still meaningful because the number of iterations
$n_{iter}n_{gmres}$ in ZoloEig (considering both the GMRES iterations and
subspace iterations) is smaller than the number of subspace iterations
$n_{iter}$ in FEAST, and these iteration numbers cannot be reduced by
parallel computing. Therefore, if unlimited computer resource was used, the
total parallel runtime will be dominated by the iteration time in both
ZoloEig and FEAST, and hence ZoloEig could be still faster than FEAST.  Note
that in practice the computer resource might be limited. In such a case, it
is of interest to design faster parallel algorithms with a fixed number of
processes. Given a fixed number of processes, ZoloEig has less number of
matrix factorization and hence can assign more processes to each matrix
factorization and each application of the factorization. Therefore, the
runtime of parallel matrix factorization and iterative part in ZoloEig would
be shorter than that of FEAST. The parallel version of ZoloEig is under
development and it is worth to expore this benifit for large-scale
eigenvalue problems.

\subsection{Florida Sparse Matrix Collection}

In the third example, the proposed algorithm is applied to general sparse
Hermitian matrices from the Florida sparse matrix collection. In order to
show the broad applicability of the algorithm, all Hermitian matrices with
size between 200 and $6,000$ in the collection are tested. The full list of
these matrices can be found in the test file ``test\_eigs\_Florida.m'' in
the MATLAB toolbox. For each of these matrices, we randomly choose an
interval $(a,b)$ containing 96 eigenvalues.

In these examples, we compare the performance of the ZoloEig algorithm with
the FEAST algorithm based on the contour integral method with trapezoidal
rule. The subspace refinement is turned off again, aiming at testing the
approximation accuracies of the Zolotarev's rational function and the 
discretized contour integral. The order $r$ in the Zolotarev's rational
function is $4$ and the contour integral method has $16$ poles. Hence, both
the ZoloEig and FEAST algorithms use the same order of rational functions in
the approximation.

\begin{figure}[htb]
    \centering
    \begin{subfigure}[t]{0.48\textwidth}
        \centering
        \includegraphics[width=\textwidth]{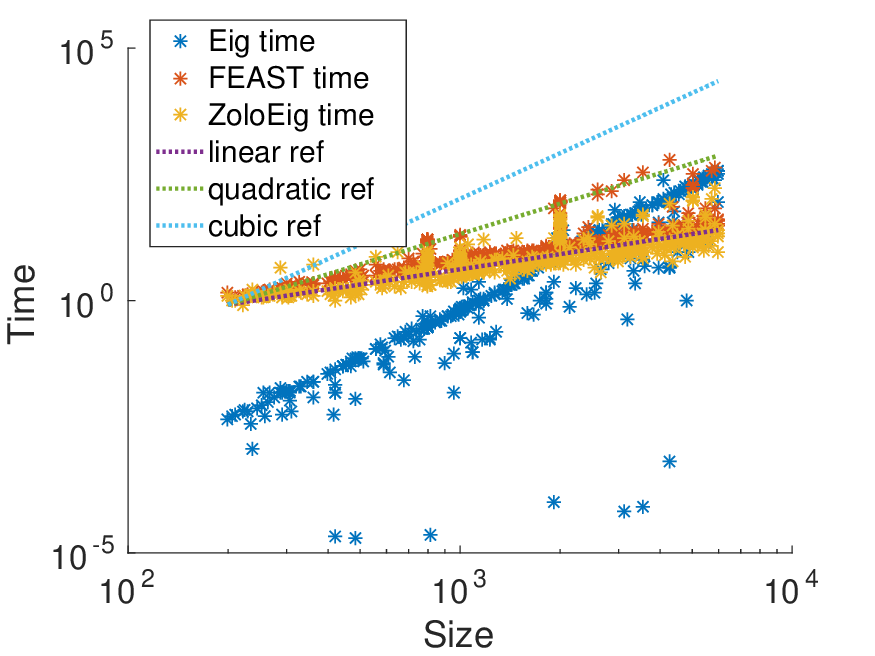}
        \caption{}
        \label{fig:ex2-time}
    \end{subfigure}
    \begin{subfigure}[t]{0.48\textwidth}
        \centering
        \includegraphics[width=\textwidth]{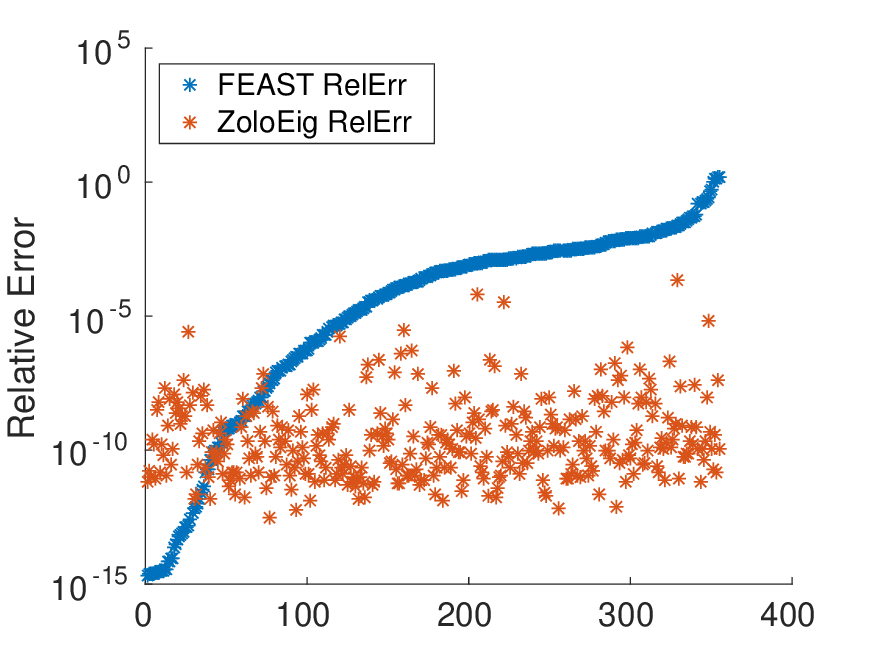}
        \caption{}
        \label{fig:ex2-err}
    \end{subfigure}
    \caption{Running time and relative error for matrices in Florida matrix
    collection solved via ZoloEig and FEAST. The relative error here is the
    relative error of eigenvalues defined in \eqref{eq:relerr}.}
    \label{fig:ex2}
\end{figure} 

Figure~\ref{fig:ex2} visualizes the results of both the ZoloEig and the
FEAST algorithms.  Figure~\ref{fig:ex2-time} includes the total running time
of the MATLAB default dense eigensolver Eig, FEAST and ZoloEig. The running
time of Eig aligns with the cubic scaling reference, whereas the running
times of both FEAST and ZoloEig align with the linear scaling reference. As
explained in previous examples, for a problem of small size, the iterative
part in both FEAST and ZoloEig dominates the running time. The outliers of
each line in these figures are caused by different sparsity densities and
patterns of sparse matrices. According to Figure~\ref{fig:ex2-time}, the
running time of FEAST is constantly larger than ZoloEig. In
Figure~\ref{fig:ex2-err}, the relative error of FEAST is larger than ZoloEig
for most matrices.  Based on the right part of Figure~\ref{fig:ex2-err},
FEAST fails for some sparse matrices, where the relative error is close to
1. Meanwhile, the relative error of ZoloEig is smaller than 1e-4 in all
cases and the overall accuracy is about 1e-10. This observation supports
that the composition of Zolotarev's rational functions is a better way to
approximate rectangular functions.

\section{Conclusion}\label{sec:discussion}

This paper proposed an efficient method for computing selected eigenpairs
of a sparse Hermitian definite matrix pencil $(A,B)$ in the generalized
eigenvalue problem.  First, based on the best rational function
approximations of signum functions by Zolotarev, the best high-order
rational filter in a form of function compositions is proposed. Second,
taking advantage of the shift-invariant property of Krylov subspaces
in iterative methods and the matrix sparsity in sparse direct solvers,
a hybrid fast algorithm is proposed to apply the best rational filter
in the form of function compositions. Assuming that the sparse Hermitian
matrices $A$ and $B$ are of size $N\times N$ and contains $O(N)$ nonzero
entries, the computational cost for computing $O(1)$ eigenpairs is
$O(F_N)$, where $F_N$ is the operation count for solving the shifted
linear system $(A-\sigma B)x=b$ using sparse direct solvers.

Comparing to the state-of-the-art algorithm FEAST, the proposed ZoloEig has 
a better performance in our test examples for sequential computation. The 
numerical results in the sequential computation also implies that ZoloEig might 
also have good performance in parallel computation, which will be left as future 
work.

It is worth pointing out that the proposed rational filter can also
be applied efficiently if an efficient dense direct solver or an
effective iterative solver for solving the multi-shift linear systems
in \eqref{eqn:mts} is available. The proposed rational function
approximation can also be applied as a preconditioner for indefinite
sparse linear system solvers \cite{Xi2017} and the orbital minimization
method in electronic structure calculation \cite{LuYang2017}. These
will be left as future works.

{\bf Acknowledgments.} Y. Li was supported in part by National Science
Foundation under awards DMS-1454939 and OAC-1450280, and also AMS-Simons
Travel Grant. H. Yang was partially supported by the US
National Science Foundation under award DMS-1945029. The authors would like to thank Fabiano
Corsetti for seting up Silicon Bulk examples.

\appendix{Proofs of the properties in Section \ref{sub:matvec}}
\textbf{Proof of Proposition \ref{pro:p1}:}

\begin{proof}
First we prove that we have the following partial fraction representation
\begin{equation}\label{eqn:pb}
    M x \prod_{j = 1}^{r-1} \frac{ x^2 + c_{2j} }{ x^2 + c_{2j-1} } =
    M x \left( 1 + \sum_{j = 1}^{r-1} \frac{b_j}{x^2 + c_{2j-1}} \right),
\end{equation}
where
\begin{equation}
    b_j = (c_{2j} - c_{2j-1}) \prod_{ k = 1, k \neq j}^{r-1} \frac{c_{2k}
    - c_{2j-1}}{c_{2k-1} - c_{2j-1}}
\end{equation}
for $j = 1, \dots, r-1$. Since any rational function has a partial
fraction form and the coefficients $\{c_{2j-1}\}$ are distinct,
Equation~\eqref{eqn:pb} holds. One can verify \eqref{eqn:pb} by
multiplying $x^2 + c_{2j-1}$ to both sides and set $x = \imath
\sqrt{c_{2j-1}}$.

By \eqref{eqn:pb}, we have
\begin{equation}
    Z_{2r}(x; \ell) = M x \frac{ \prod_{j = 1}^{r-1}(x^2 + c_{2j})
    }{ \prod_{j = 1}^{r}(x^2 + c_{2j-1})} = M x \left( 1 + \sum_{j =
    1}^{r-1} \frac{b_j}{x^2 + c_{2j-1}} \right) \frac{1}{x^2 + c_{2r-1}}.
\end{equation}
Hence, simple partial fraction representations of
\begin{equation}
    \frac{b_j}{(x^2 + c_{2j-1})(x^2 + c_{2r-1})} = \frac{b_j}{c_{2r-1} -
    c_{2j-1}} \left( \frac{1}{x^2 + c_{2j-1}} - \frac{1}{x^2 + c_{2r-1}}
    \right)
\end{equation}
for $j = 1, \dots, r-1$ complete the proof of the proposition.
\end{proof}

\textbf{Proof of Proposition \ref{pro:p2}:}

\begin{proof}
    We further decompose \eqref{eqn:pa} as complex rational functions,
    \begin{equation}
	Z_{2r}(x; \ell) = M \sum_{j = 1}^r \frac{a_j}{2} \left(
	\frac{1}{x+\imath \sqrt{c_{2j-1}}} + \frac{1}{x-\imath
    \sqrt{c_{2j-1}}} \right).  \label{eq:zolocomplexpa}
    \end{equation}
    Substitute the M\"obius transformation into \eqref{eq:zolocomplexpa},
    \begin{equation}
        \begin{split}
		Z_{2r}( T(x); \ell) = & M \sum_{j = 1}^r \frac{a_j}{2}
		\left( \frac{x - \beta}{ \gamma (x - \alpha) + \imath
		\sqrt{c_{2j-1}} (x - \beta) } + \frac{x - \beta}{\gamma
		(x-\alpha)-\imath \sqrt{c_{2j-1}}(x - \beta)} \right) \\
	    = & M \sum_{j = 1}^r
	    \frac{a_j}{2} \left( \frac{ \frac{x - \beta}{\gamma +
	    \imath \sqrt{c_{2j-1}} } }{ x - \frac{\gamma \alpha + \imath
	    \sqrt{c_{2j-1}} \beta}{\gamma + \imath \sqrt{c_{2j-1}} } }
	    + \frac{ \frac{x - \beta}{\gamma - \imath \sqrt{c_{2j-1}}
	    } }{ x - \frac{\gamma \alpha - \imath \sqrt{c_{2j-1}}\beta
	    }{\gamma - \imath \sqrt{c_{2j-1}} } } \right).  \\
        \end{split}
	    \label{eq:hZexpan}
    \end{equation}
    We denote
    \begin{equation}
	\sigma_j := \frac{\gamma \alpha + \imath \sqrt{c_{2j-1}} \beta
	}{\gamma + \imath \sqrt{c_{2j-1}} } = \frac{(\gamma^2\alpha +
	c_{2j-1}\beta) + \imath \sqrt{c_{2j-1}}(\beta -
	\alpha)\gamma}{\gamma^2 + c_{2j-1}}.
    \end{equation}
    Readers can verify that
    \begin{equation}
	\bar{\sigma}_j = \frac{\gamma \alpha - \imath \sqrt{c_{2j-1}}\beta
	}{\gamma - \imath \sqrt{c_{2j-1}} },
    \end{equation}
    where $\bar{\sigma}_j$ is the complex conjugate of $\sigma_j$.
    Equation~\eqref{eq:hZexpan} can be rewritten as,
    \begin{equation}
        Z_{2r}( T(x); \ell) = 
        M \sum_{j = 1}^r \frac{a_j \gamma}{\gamma^2 + c_{2j-1}} + M
        \sum_{j = 1}^r \left( \frac{w_j}{x - \sigma_j} +
        \frac{\bar{w}_j}{x - \bar{\sigma}_j}\right),
    \end{equation}
    where 
    \begin{equation}
	w_j = \frac{a_j (\sigma_j - \beta)}{2(\gamma + \imath
	\sqrt{c_{2j-1}})}.
    \end{equation}

\end{proof}

\bibliographystyle{abbrv}
\bibliography{ref}

\end{document}